\documentclass[12pt]{article}

\usepackage{hyperref,amsmath,amssymb,amscd,amsthm,makeidx,txfonts,graphicx,url,bm}
\usepackage{a4wide,xy}
\usepackage{float}

\numberwithin{equation}{subsection}

\setlength{\marginparwidth}{1.2in}
\let\oldmarginpar\marginpar
\renewcommand\marginpar[1]{\-\oldmarginpar[\raggedleft\footnotesize #1]
{\raggedright\footnotesize #1}}

\makeindex

\xyoption{all}
\input{xypic}

\newtheorem{theorem}{Theorem}[subsection]
\newtheorem{proposition}[theorem]{Proposition}
\newtheorem{corollary}[theorem]{Corollary}

\newtheorem{question}[theorem]{Question}

\theoremstyle{remark}
\newtheorem{remark}[theorem]{Remark}
\newtheorem{example}[theorem]{Example}

\theoremstyle{definition}
\newtheorem{definition}[theorem]{Definition}

\newcounter{margin}
{\end{itshape}  \bigskip}

\def\bes{\begin{eqnarray*}}
\def\ees{\end{eqnarray*}}

\def\vv{{\bf v}}

\DeclareMathOperator{\Aut}{Aut} 
 \DeclareMathOperator{\Hom}{Hom}

\DeclareMathOperator{\Hilb}{Hilb}

\DeclareRobustCommand{\stirling}{\genfrac\{\}{0pt}{}}

\def\A{{\bf A}}

\def\V{\mathbb{V}}
\def\C{\mathbb{C}}
\def\M{{\mathcal{M}}}

\def\calV{{\mathcal{V}}}
\def\calE{{\mathcal{E}}}

\def\calP{\mathcal{P}}
\def\calH{\mathcal{H}}

\def\1{{\bf 1}}

\def\v{\mathbf{v}}
\def\w{\mathbf{w}}
\def\P{\mathcal{P}}
\def\pihat{{\bf \pi}}

\def\N{\mathbb{Z}_{\geq 0}}

\def\R{\mathbb{R}}
\def\F{\mathbb{F}}

\def\Q{\mathbb{Q}}

\def\calC{{\mathcal C}}

\def\Z{\mathbb{Z}}

\def\K{\mathbb{K}}

\def\gl{{\mathfrak g\mathfrak l}}

\newcommand{\nc}{\newcommand}
\nc{\bM}{{\mathbb M}}
\nc{\Sp}{{\rm Sp}}

\nc{\op}[1]{\mathop{\mathchoice{\mbox{\rm #1}}{\mbox{\rm #1}}
{\mbox{\rm \scriptsize #1}}{\mbox{\rm \tiny #1}}}\nolimits}
\nc{\al}{\alpha}

\nc{\ep}{\varepsilon} \nc{\ga}{\gamma} \nc{\Ga}{\Gamma}
\nc{\la}{\lambda} \nc{\La}{\Lambda} \nc{\si}{\sigma}
\nc{\Sig}{{\Gamma}} \nc{\Om}{\Omega} \nc{\om}{\omega}

\nc{\SL}{{\rm SL}} \nc{\GL}{{\rm GL}} \nc{\PGL}{{\rm PGL}}
\nc{\G}{{\rm G}}
\nc{\bV}{{\mathbb V}}
\nc{\g}{{\mathfrak g}}
\nc{\Gr}{{\rm Gr}}

\nc{\beq}[1]{\begin{eqnarray}\label{#1}}
\nc{\eeq}{\end{eqnarray}}

\renewcommand{\P}{{\mathbb P}}

\nc{\cpt}{{\op{cpt}}} \nc{\Dol}{{\op{Dol}}} \nc{\DR}{{\op{DR}}}
\nc{\B}{{\op{B}}} \nc{\Triv}{\op{Triv}} \nc{\Hod}{{\op{Hod}}}
\nc{\Log}{{\op{Log}}} \nc{\Exp}{{\op{Exp}}} \nc{\Est}{E_{\op{st}}}
\nc{\Hst}{H_{\op{st}}} \nc{\Left}[1]{\hbox{$\left#1\vbox to
  10.5pt{}\right.\nulldelimiterspace=0pt \mathsurround=0pt$}}
\nc{\Right}[1]{\hbox{$\left.\vbox to
  10.5pt{}\right#1\nulldelimiterspace=0pt \mathsurround=0pt$}}
\nc{\LEFT}[1]{\hbox{$\left#1\vbox to
  15.5pt{}\right.\nulldelimiterspace=0pt \mathsurround=0pt$}}
\nc{\RIGHT}[1]{\hbox{$\left.\vbox to
  15.5pt{}\right#1\nulldelimiterspace=0pt \mathsurround=0pt$}}

\nc{\bee}{{\bf E}} \nc{\bphi}{{\bf \Phi}}

\begin{document}

\title{Cohomology of large semiprojective hyperk\"ahler varieties}

\author{ Tam\'as Hausel
\\ {\it EPF Lausanne}
\\{\tt tamas.hausel@epfl.ch}
 \and Fernando Rodriguez-Villegas
\\ 
{\it  ICTP Trieste} \\ {\tt villegas@ictp.it}\\  }

\pagestyle{myheadings}

\maketitle

{\begin{center}{\it \`a G\'erard Laumon \`a l'occasion de son
60\'eme anniversaire}\end{center}}

\begin{abstract} In this paper we survey geometric and arithmetic techniques to study the
  cohomology of semiprojective hyperk\"ahler manifolds including toric
  hyperk\"ahler varieties, Nakajima quiver varieties and moduli spaces
  of Higgs bundles on Riemann surfaces. The resulting formulae for
  their Poincar\'e polynomials are combinatorial and representation
  theoretical in nature.  In particular we will look at their Betti
  numbers and will establish some results and expectations on their
  asymptotic shape.
\end{abstract}

At the conference ``De la g\'eom\'etrie alg\'ebrique aux formes
automorphes : une conf\'erence en l'honneur de G\'erard Laumon" the
first author gave a talk, whose subject is well-documented in the
survey paper \cite{hausel-survey}. Here, instead, we will discuss
techniques, both geometrical and arithmetic, for obtaining information
on the cohomology of semiprojective hyperk\"ahler varieties and we
will report on some observations on the asymptotic behaviour of their Betti
numbers in certain family of examples.

We call $X$ a smooth quasi-projective variety with a
$\C^\times$-action {\em semiprojective} when the fixed point set
$X^{\C^\times}$ is projective and for every $x\in X$ and as
$\lambda\in \C^\times$ tends to $0$ the limit $\lim_{\lambda \to 0}
\lambda x$ exists.

Varieties with these assumptions were originally studied by Simpson in
\cite[\S 11]{simpson-nonabelian} and varieties with similar
assumptions were studied by Nakajima in \cite[\S
5.1]{nakajima-hilbert}. The terminology semiprojective in this context
appeared in \cite{hausel-sturmfels}, which concerned semiprojective
toric varieties and toric hyperk\"ahler varieties. In particular, a
large class of hyperk\"ahler varieties, which arise as a hyperk\"ahler
quotient of a vector space by a gauge group, are semiprojective.
These include Hilbert schemes of $n$-points on $\C^2$, Nakajima quiver
varieties and moduli spaces of Higgs bundles on Riemann surfaces.

It turns out that despite their simple definition
 we can say quite a lot about the geometry and
cohomology of semiprojective varieties.  We can construct a
Bialinycki-Birula stratification (\S \ref{bb}), which in
\S\ref{csp} will give a perfect Morse stratification in the sense of
Atiyah--Bott. This way we will be able to
deduce that the cohomology of a semiprojective variety is isomorphic
with the cohomology of the fixed point set $X^{\C^\times}$ with some
cohomological shifts. Also, the opposite Bialinycki-Birula
stratification will stratify a projective subvariety $\calC\subset X$
of the semiprojective variety, the so-called {\em core}, which turns out to be a deformation
retract of $X$. This way we
can deduce that the cohomology $H^*(X;\C)$ is always
pure. Furthermore, we can compactify $\overline X=X\coprod Z$ with a
divisor $Z$, to get an orbifold $\overline{X}$. Finally in \S\ref{whl} we will
look at a version of a weak form of the Hard Lefschetz theorem
satisfied by semiprojective varieties.

We will also discuss arithmetic approach to obtain information on the
cohomology of our hyperk\"ahler varieties. It turns out that the
algebraic symplectic quotient construction of our hyperk\"ahler
varieties will enable us to use a technique we call {\em arithmetic
  harmonic analysis} to count the points of our hyperk\"ahler
varieties over finite fields. With this technique we can effectively
determine the Betti numbers of the toric hyperk\"ahler varieties and
Nakajima quiver varieties as well as formulate a conjectural
expression for the Betti numbers of the moduli space of Higgs bundles.

To test the range in which the Weak Hard Lefschetz theorem of
\S\ref{whl} might hold, we will look at the graph of Betti numbers for
our varieties when their dimension is very large.  The resulting
pictures are fairly similar and we observe that asymptotically they
seem to converge to the graph of some continuous functions.  We will
see, for example, the normal, Gumbel and Airy distributions emerging
in the limit in our examples. We will conclude the paper with some
proofs and heuristics towards establishing such facts.

\begin{paragraph}{\bf Acknowledgement} We would like to thank G\'abor Elek, Stavros
  Garoufadilis, Sergei Gukov, Jochen Heinloth, Daniel Huybrechts,
  Andrew Morrison, Antonello Scardicchio, Christoph Sorger,  Bal\'azs
  Szegedy and Bal\'azs Szendr\H{o}i for 
  discussions related to this paper. The first author was supported by
  a Royal Society University Research Fellowship  (2005-2012)
  and  by the  Advanced Grant "Arithmetic and physics of Higgs moduli spaces" no. 320593 of the European Research Council (2013-2018) during work on this paper. 
 The second author  is supported by the NSF grant
DMS-1101484 and a Research Scholarship from the Clay Mathematical
Institute. He would also like to thank the Mathematical Institute of
University of Oxford where this work was started for its hospitality.
\end{paragraph}

\section{Semiprojective varieties}

\subsection{Definition and examples}
\label{semiproj}

We start with the definition of a semiprojective variety, first considered in \cite[Theorem 11.2]{simpson-nonabelian}.

\begin{definition}\label{semi} Let $X$ be a complex quasi-projective algebraic variety with a 
$\C^\times$-action. We call $X$ {\em semiprojective} when the following
two assumptions hold: 
\begin{enumerate}\item The fixed point set $X^{\C^\times}$ is proper.
\item For every $x\in X$ the  
$\lim_{\lambda \to 0} \lambda x$  exists  as $\lambda\in \C^\times$ tends to $0$.
\end{enumerate}
\end{definition}

The second condition could be phrased more algebraically as follows: for every $x\in X$ we have an equivariant map $f:\C\to X$ such that $f(1)=x$ and $\C^\times$ acts on $\C$ by multiplication. 

First  example is a projective variety with a trivial (or  any) 
$\C^\times$-action. 
For a large class of non-projective examples one can take the total space of a vector bundle on a projective variety, which together with the canonical $\C^\times$-action will become semiprojective. 


A good source of examples arise by taking GIT quotients of linear group
actions of reductive groups on vector spaces. Examples include the semiprojective toric varieties of \cite{hausel-sturmfels} (even though the definition of semiprojectiveness is different there, but
 equivalent with ours) and quiver varieties studied by Reineke \cite{reineke1}.

\subsubsection{Semiprojective hyperk\"ahler varieties}
\label{hksp}
In this survey we are interested in semiprojective hyperk\"ahler
varieties. Examples arise by taking the algebraic symplectic
quotient of a complex symplectic vector space $\bM$ by a symplectic
linear action of a reductive group $\overline{\rho}:\G\to
\Sp(\bM)$. In practice $\bM=\bV\times \bV^*$ and $\overline{\rho}$
arises as the doubling of a representation $\rho:\G\to \GL (\bV)$.  If
$\g$ denotes the Lie algebra of $\G$, we have the derivative of $\rho$
as $\varrho:\g\to\gl(\bV)$. This gives us the moment map
\begin{equation}
\label{momentmap}\mu: \bM\to \g^*,\end{equation}  at $x\in \g$ by the formula \begin{equation}\label{momentmapdef} 
\langle \mu(v,w),x\rangle=\langle \varrho(x)v,w\rangle.
\end{equation}  By construction $\mu$ is equivariant with respect to the coadjoint action of $\G$ on $\g^*$.
Taking a character $\sigma\in \Hom(\G,\C^\times)$ will yield the GIT quotient
$\M_\sigma:=\mu^{-1}(0)/\!/_\sigma\G$ using the linearization induced by $\sigma$. Sometimes 
$\sigma$ can be chosen generically so that $\M_\sigma$ becomes non-singular (and by construction) quasi-projective. We assume this henceforth. By construction of the GIT quotient we have the proper
affinization map \begin{equation}\label{affinization} \M_\sigma\to\M_0 \end{equation} to the affine GIT quotient $\M_0=\mu^{-1}(0)/\!/\G.$ 

The $\C^\times$-action on $\bM$ given by dilation will
commute with the linear action of $\G$ on it so that the moment map 
\eqref{momentmap} will be equivariant with respect to  this and the weight $2$ action of $\C^\times$ on $\g^*$. This will induce a $\C^\times$-action on $\M_\sigma$, such that on the
affine GIT quotient $\M_0$ it will have a single fixed point 
corresponding to the origin in $\mu^{-1}(0)\subset \bM$. This and the fact
that the affinization map \eqref{affinization} is proper implies, that $\M_\rho^\sigma$ is 
semiprojective, provided that $\M_\rho^\sigma$ is non-singular, which we always assume. 

An important special case is when $Z(\GL(\V))\subset {\rm im}
\rho$. In this case we can take a square root of the $\C^\times$
action above by acting only on $\V^\times$ by dilation and trivially
on $\V$. This action will also commute with the action of $\G$ on
$\bM=\V\times \V^\times$ and will indeed reduce to a
$\C^\times$-action on the quotient $\M_\rho^\sigma$ whose square is
the $\C^\times$-action we considered in the previous paragraph. In
particular this new action also makes $\M_\rho^\sigma$ semiprojective. In fact, it will have an additional property. Namely, the
natural symplectic form $\omega_\bM$ on $\bM$ will be of homogeneity
$1$ with respect to the $\C^\times$ action; in other words, it will
satisfy \begin{equation}\label{hyper} \lambda^*(\omega_\bM)=\lambda
  \omega_\bM \end{equation} under this action. This property will be
inherited by the quotient $\M_\rho^\sigma$. Following
\cite{hausel-qgm} we make the following

\begin{definition}
\label{hyper-compact}
A semiprojective hyperk\"ahler variety with a symplectic form  of  homogeneity one as in
\eqref{hyper} is called {\em hyper-compact}.
\end{definition}

When $\G$ is a torus, $\M_\rho^\sigma$ are the toric hyperk\"ahler
varieties of \cite{hausel-sturmfels}; these always can be arranged to become
hyper-compact. When the representation $\rho$
arises from a quiver with a dimension vector $\M_\rho^\sigma$ is a
quiver variety as constructed by Nakajima in \cite{nakajima}. When the quiver has no
edge loops, one can always arrange that $\M_\rho^\sigma$ becomes hyper-compact.
When the quiver is the tennis-racquet quiver, i.e. two
vertices connected with a single edge and with a loop on one of them,
and the dimension vector is $1$ in the simple vertex and $n$ on the
looped one, the Nakajima quiver variety becomes isomorphic with
$(\C^2)^{[n]}$ the Hilbert scheme of $n$ points on $\C^2$. This semiprojective
hyperk\"ahler variety however
 is not hyper-compact as we will see later.

Finally, the following hyper-compact examples originally arose from an infinite dimensional
analogue of the above construction. In \cite{hitchin} Hitchin
constructs the moduli space of semistable rank $n$ degree $d$ Higgs
bundles on a Riemann surface as an infinite dimensional gauge
theoretical quotient. A Higgs bundle is a pair $(E,\phi)$ of a rank
$n$ degree $d$ vector bundle $E$ on the Riemann surface $C$ and
$\phi\in H^0(C;{\rm End}(E)\otimes K_C)$. Nitsure \cite{nitsure}
constructed such moduli spaces $\M^d_n$ in the algebraic geometric
category, which are non-singular quasi-projective varieties when
$(n,d)=1$. There is a natural $\C^\times$ action on $\M^d_n$ given by
scaling the Higgs field $(E,\phi)\mapsto (E,\lambda \phi)$. Hitchin
\cite{hitchin} when $n=2$ and Simpson \cite[Corollary
10.3]{simpson-nonabelian} in general showed that $\M^d_n$ is
semiprojective. A nice argument to see this, is similar for the
argument for $\M_\sigma$ above. Namely the
affinization \begin{equation}\label{hitchinmap} \chi: \M^d_n\to
  \A \end{equation} turns out to be the famous Hitchin map
\cite{hitchin-integrable}, which by results of Hitchin \cite{hitchin}
when $n=2$ and Nitsure \cite{nitsure} for general $n$ is a proper
map. It is also $\C^\times$-equivariant wich covers a $\C^\times$-action on the affine $\A$ with a single fixed point.  This implies
that $\M^d_n$ is indeed semiprojective.

\subsection{Bialinycki-Birula decomposition of semiprojective
  varieties}
\label{bb}
Much in this section is due to Simpson \cite{simpson-nonabelian}, Nakajima \cite{nakajima} and Atiyah--Bott \cite{atiyah-bott}. 

Let $X$ be a non-singular semiprojective variety. 
Let $X^{\C^\times}=\coprod_{i\in I} F_i$
be the decomposition of the fixed point set into connected components. Then $I$ is finite and $F_i$ are non-singular projective subvarieties of $X$. According to \cite[Corollary 7.2]{dolgachev} we can linearize the action of $\C^\times$ on a very ample line bundle $L$ on $X$. On each $F_i$ then $\C^\times$   will act on $L$  through
a homomorphism $\C^\times \to \C^\times$ with weight $\alpha_i\in \Z$ which we can assume, by suitably changing the linearisation, are always non-negative $\alpha_i\in \N$. We introduce a partial ordering on $I$ by setting 
\begin{equation}\label{order}i<j \qquad \Leftrightarrow \qquad
  \alpha_i>\alpha_j.\end{equation}

Introduce $U_i\subset X$ as the set of points $x\in X$ for which 
$\lim_{t\rightarrow 0}tx\in F_i$. 
Similarly, as above, we can define
$D_i$ as the points $x\in X$ for which 
$\lim_{t\rightarrow \infty}\lambda x\in 
F_i$. These are locally closed subsets and Bialynicki-Birula
\cite[Theorem 4.1] {bialynicki} proves that both $U_i$ and $D_i$ are subschemes
of $X$ which are isomorphic to  
certain affine
 bundles (so-called $\C^\times$-fibrations) over $F_i$. 

 It will be convenient to make the following
\begin{definition}
\label{core-defn}
The {\em core} of the semiprojective variety $X$ is
$$
\calC:=\cup_{i\in I} D_i \subset X
$$ 
\end{definition}

 By assumption 2 of Definition~\ref{semi} we get the Bialinycki-Birula decomposition 
$X=\coprod_{i\in I} U_i$.  This decomposition  satisfies that \begin{equation}\label{closure} \overline{U_i}\subset \cup_{j\geq i} U_i.\end{equation} To see this we note that using the linearisation on the very ample line bundle $L$ we can equivariantly embed $X$ into some projective space $\P^N$ with a linear $\C^\times$ action. \eqref{closure} follows from the corresponding statement for the linear action of $\C^\times$ on $\P^N$, where it is clear.

It follows from the Hilbert-Mumford criterium for semistability that
$X^{ss}=X\setminus \coprod_{i\in I} D_i$ with respect to our
linearisation. Thus we have a geometric quotient
$Z:=X^{ss}/\C^\times$, which is proper according to \cite[Theorem
11.2] {simpson-nonabelian} and is, in fact, an orbifold as there are
no fixed points of $\C^\times$ on $X^{ss}$. Using this construction
for the semiprojective $X\times \C$ where $\C^\times$ acts via the
diagonal action (with the standard multiplication action on the second
factor) we get
 \begin{equation}
\label{compact}
\overline{X}:=(X\times
\C)^{ss}/\C^\times,
\end{equation}
which decomposes as $\overline{X}=X\coprod Z$ corresponding to points
in $(X\times\C)^{ss}$ with non-zero (respectively zero) second
component.  This thus yields an orbifold compactification of $X$, the
algebraic analogue of Lerman's symplectic cutting \cite{lerman}, as
studied in \cite{hausel-comp}.

An immediate consequence of this compactification is the following:
\begin{corollary}
\label{proper}
The core $\calC$ of a semiprojective variety $X$ is proper.
\end{corollary}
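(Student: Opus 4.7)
The plan is to exhibit $\calC$ as a closed subscheme of the proper orbifold compactification $\overline X = X \coprod Z$ of \eqref{compact}, so that $\calC$ inherits properness.

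First, I would verify that $Z = X^{ss}/\C^\times$ lies inside $\overline X^{\C^\times}$: as a geometric quotient by the full $\C^\times$, the residual $\C^\times$-action on $Z$ is trivial, so
\[
\overline X^{\C^\times} = \Big(\coprod_{i \in I} F_i\Big) \coprod Z,
\]
and each $F_i$ is a union of connected components of $\overline X^{\C^\times}$ disjoint from $Z$. For $y \in X$ the limit $\lim_{\lambda \to \infty} \lambda y$, computed in $\overline X$, coincides with the one in $X$ whenever the latter exists, while points of $Z$ are fixed and thus their own limits; hence the Bialynicki-Birula descending stratum of $\overline X$ attached to $F_i$ is exactly the original $D_i$, and $\calC = \coprod_{i \in I} D_i^{\overline X}$.

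Second, I would arrange the linearization, by twisting with a sufficiently positive character, so that the weights $\alpha_i$ on the $F_i$ are all strictly positive. The linearization on $\overline X$ descending from the diagonal linearization on $X \times \C$, combined with the trivial character on the $\C$-factor, places every $F_i$ at weight $\alpha_i > 0$ and every component of $Z$ at weight $0$.

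Third, I would apply the closure relation \eqref{closure} in its descending version (obtained by reversing the $\C^\times$-action, which flips the inequality on weights) inside the proper variety $\overline X$: for each $i \in I$,
\[
\overline{D_i^{\overline X}} \subseteq \bigcup_{\beta\,:\,\alpha_\beta \geq \alpha_i} D_\beta^{\overline X},
\]
the union ranging over all components of $\overline X^{\C^\times}$. The weight inequality from Step 2 forces the union to stay inside $\bigcup_{i \in I} D_i^{\overline X} = \calC$. Thus $\calC = \bigcup_i \overline{D_i^{\overline X}}$ is a finite union of closed subsets of $\overline X$, hence closed; being a closed subscheme of the proper $\overline X$, it is proper.

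The principal obstacle will be Step 2: identifying carefully the weights of the $Z$-components in the linearization on $\overline X$ descending from $X \times \C$. The required strict inequality is the algebraic avatar of the Morse-theoretic statement that the compactification adjoins fixed strata at the bottom of the moment map, below all the original $F_i$, and it is essentially forced by the GIT choice of character used to form $\overline X$ in \eqref{compact}.
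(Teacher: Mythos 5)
Your proposal reproduces the paper's argument: compactify to the proper orbifold $\overline X = X \sqcup Z$, observe that the descending Bialynicki-Birula decomposition of $\overline X$ is $D_\infty \sqcup \coprod_{i\in I} D_i$ with $D_\infty = (X\setminus\calC)\cup Z$, and deduce from the closure relation \eqref{closure} (applied to the reversed action) that $\calC = \coprod_{i\in I}D_i$ is closed in $\overline X$, hence proper. Your write-up is in fact somewhat more explicit than the paper's terse proof, in particular in isolating the needed weight inequality (the components of $Z$ must sit strictly below all $F_i$ in the BB ordering), which the paper leaves implicit.
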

\begin{proof} The proper $\overline{X}$ has two Bialinycki-Birula
  decompositions. One of them is $$\overline{X}=D_\infty\cup
  \coprod_{i\in I} D_i$$ where $$D_\infty=(X\setminus \calC)\cup
  Z\subset \overline{X}.$$ Thus by property \eqref{closure} the core
  $\calC=\coprod_{i\in I} D_i$ is closed in the proper
  $\overline{X}$. The claim follows.
\end{proof}

\subsection{Cohomology of semiprojective varieties}
\label{csp}
\subsubsection{Generalities on cohomologies of complex algebraic varieties}
We denote by $H^*(X;\Z)$ the integer and by $H^*(X;\Q)$ the rational singular cohomology of a CW complex $X$.
$H^*(X;\Z)$ is a graded anti commutative ring; while $H^*(X;\Q)$ is a graded anticommutative $\Q$-algebra. 

When $X$ is a complex algebraic variety there is further structure on its rational cohomology. 
 Motivated by the Frobenius action on the $l$-adic cohomology
of a variety defined over an algebraic closure of a finite field Deligne in 1971 \cite{deligne} 
introduced mixed Hodge structures on the cohomology of any complex algebraic variety $X$. 

Here we only recall the notion of the weight filtration on rational
cohomology. It is an increasing
filtration: $$W_0(H^k(X;\Q))=0\subset\dots\subset
W_{i}(H^k(X;\Q))\subset\dots \subset W_{2k}(H^k(X;\Q))=H^k(X;\Q)$$ by
$\Q$-vector spaces $W_{i}(H^k(X;\Q))$. It has many nice
properties. For example it is functorial, 
\begin{equation}\label{smooth}
  W_{k-1}(H^{k}(X;\Q))=0\end{equation} for a smooth $X$,
and \begin{equation}\label{projective}W_k(H^k(X;\Q))=H^k(X;\Q)\end{equation}
for a projective variety $X$. We say that the weight filtration on
$H^*(X;\Q)$ is {\em pure} when both \eqref{smooth} and
\eqref{projective} holds for every $k$. In particular a smooth
projective variety always has {pure} weight filtration. We will see in
Corollary~\ref{pure} that semiprojective varieties also have pure
weight filtration.

We denote by  $$H(X;q,t)=\sum_{i,k} \dim(\Gr^W_iH^k(X;\Q)) q^{i/2}t^k\in \Z[q^{1/2},t]$$ the {\em mixed Hodge polynomial}. It has two important specializations. The polynomial  
$$P(X;t)=H(1,t)= \sum_k  \dim(H^k(X;\Q))t^k\in \Z[t]$$
is the {\em Poincar\'e polynomial of $X$}, while the specialization
\begin{equation}\label{epolydef}E(X;q)=q^{\dim X} H( X;1/q,-1)=\sum_{i,k} (-1)^k \dim(\Gr^W_iH^k(X;\Q)) q^{{\dim X}-i/2}\in \Z[q^{1/2}] \end{equation} the
{\em virtual weight polynomial}. In the case when the weight filtration is pure on $H^*(X;\Q)$ we have the relationships
\begin{equation}\label{purel}H(X;q,t)=P(X;q^{1/2}t)=E(X;qt^2).
\end{equation} In the general case however there is no such relationships.

\subsubsection{The case of semiprojective varieties}
Let $X=\coprod_{i\in I} U_i$ the Bialinycki-Birula decomposition of a
semiprojective variety, with index set $I$ given a partial ordering as
in \eqref{order} . Following \cite[pp 537]{atiyah-bott}let $J\subset
I$ such that \begin{equation}\label{open}j\in I \mbox{ and } i<j
  \mbox{ implies } i\in J.\end{equation} Then by $U_J:=\cup_{j\in J}
U_j$ is open in $X$ by \eqref{closure}. Let $\lambda\in I\setminus J$
be minimal and let $J^+:=J\cup \lambda$, this also satisfies
\eqref{open} so $U_{J^+}$ is also open in $X$ and $U_{\lambda}$ is
closed in $U_{J^+}$. Furthermore the open subvarieties $U_{J}$ and
$U_{J^+}$ of $X$ are both semiprojective with core
$$D_J:=\coprod_{j\in J} D_j\subset U_J$$ and $$D_{J^+}=\coprod_{j\in J^+} D_j=D_J\cup D_\lambda\subset U_{J^+}.$$ 
We now have the following commutative diagram:
 \begin{equation}\label{compare}\begin{array}{ccccccc}  \to & H^{j-k_\lambda}(U_{\lambda};\Z)& \to & H^{j}(U_{J^+};\Z)&\to& H^{j}(U_J;\Z)&\to \\ &\downarrow i^*_\lambda &&\downarrow i_J^* &&\downarrow i_{J^+}^* & \\  \to & H^{j-k_\lambda}(F_{\lambda};\Z)& \to & H^{j}(D_{J^+};\Z)&\to& H^{j}(D_J;\Z)&\to  \end{array}. \end{equation} Here the top row  is the cohomology
long-exact sequence of the pair $(U_{J^+},U_J)$ and $$H^{j-k_\lambda}(U_{\lambda},\Z)\cong H^j(U_{J^+},U_J;\Z)$$ is excision followed by the Thom isomorphism theorem, where $k_\lambda={\rm codim} U_\lambda$. The bottom row is the cohomology
long-exact sequence of the pair $(D_{J^+},D_J)$, where again $$H^{j-k_\lambda}(F_\lambda,\Z)\cong H^j(D_{J^+},D_J;\Z)$$ is the Thom isomorphism. Finally $i_\lambda:F_\lambda\to D_\lambda$, $i_J:D_J\to U_J$ and $i_{J^+}:D_{J^+}\to U_{J^+}$ denote the corresponding imbeddings. 

Clearly $i_\lambda^*$ is an isomorphism. So if we know that $i_J^*$ is an isomorphism, so will be $i^*_{J^+}$ by the five lemma. If $J_{min}=\{\lambda_{min}\}$ denotes a minimal element in $I$, then $D_{J_{min}}\cong F_{\lambda_{min}}$ and so $i_{J_{min}}^*:H^*(U_{J^{min}};\Z)\cong H^*(D_{J^{min}};\Z)$. Therefore by induction we get that $$i^*_J:H^*(U_{J};\Z)\cong H^*(D_{J};\Z)$$ is
an isomorphism for all $J$ satisfying \eqref{open}. Thus in particular we have:

\begin{theorem} \label{isocoh} The embedding $i:\calC\cong D_I\to X\cong U_I$ induces an isomorphism $i^*:H^*(X;\Z)\cong H^*(\calC;\Z)$.
\end{theorem}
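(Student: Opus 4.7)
The plan is to prove $i^*$ is an isomorphism by induction over subsets $J \subseteq I$ satisfying \eqref{open}, establishing at each stage that the restriction $i_J^*: H^*(U_J; \Z) \to H^*(D_J; \Z)$ is an isomorphism. The theorem then follows by specializing to $J = I$, using $U_I = X$ and $D_I = \calC$.

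For the base case, I would take $J = \{\lambda_{\min}\}$ where $\lambda_{\min}$ is minimal in the order \eqref{order}. Minimality forces all $\C^\times$-weights on the normal bundle of $F_{\lambda_{\min}}$ in $X$ to have the same sign, so $D_{\lambda_{\min}}$ reduces to $F_{\lambda_{\min}}$ itself while $U_{\lambda_{\min}}$ is an affine bundle over $F_{\lambda_{\min}}$; the restriction $i_J^*$ is then pullback along the zero section of an affine bundle, hence a cohomology isomorphism. For the inductive step, assuming $i_J^*$ is an isomorphism, I would pick a minimal $\lambda \in I \setminus J$, so that $J^+ := J \cup \{\lambda\}$ still satisfies \eqref{open}, and invoke the commutative diagram \eqref{compare}. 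Its two rows are the long exact sequences of the pairs $(U_{J^+}, U_J)$ and $(D_{J^+}, D_J)$; the relative groups identify, via excision and the Thom isomorphism, with $H^{*-k_\lambda}(U_\lambda)$ and $H^{*-k_\lambda}(F_\lambda)$ respectively, linked by the left vertical restriction $i_\lambda^*$. This $i_\lambda^*$ is itself the pullback along the zero section of the affine bundle $U_\lambda \to F_\lambda$, hence an isomorphism; the right vertical is an isomorphism by induction; and the five lemma then forces the middle vertical $i_{J^+}^*$ to be an isomorphism.

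The main subtlety is setting up the commutative diagram \eqref{compare} itself. On the $U$-side, $U_\lambda$ is genuinely closed in $U_{J^+}$ (its complement $U_J$ is open by \eqref{closure} and the minimality of $\lambda$), so Thom applies directly to the closed embedding $U_\lambda \hookrightarrow U_{J^+}$. On the $D$-side, however, it is $D_J$ that is closed in $D_{J^+}$ while $D_\lambda$ is open, so one cannot Thom directly on $D_\lambda$; instead one must identify the pair $(D_{J^+}, D_J)$ with $(D_{J^+}, D_{J^+} \setminus F_\lambda)$ up to homotopy, and then apply Thom on the genuinely closed submanifold $F_\lambda \subset D_{J^+}$. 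The required homotopy equivalence $D_J \hookrightarrow D_{J^+} \setminus F_\lambda$ comes from the $\C^\times$-flow $t \to 0$: an orbit in $D_\lambda \setminus F_\lambda$ closes up at $t = 0$ on a fixed-point component $F_j$ with $j < \lambda$, and by downward closure of $J$ any such $F_j$ lies in $F_J \subset D_J$, so the flow retracts $D_{J^+} \setminus F_\lambda$ onto $D_J$. Once this is in place and the codimensions on the two rows are matched at $k_\lambda$, functoriality of the long exact sequence supplies the commutativity, and the rest is a routine five-lemma chase.
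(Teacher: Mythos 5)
Your proof follows the same route as the paper's: induct over downward-closed subsets $J$ of $I$, compare the long exact sequences of $(U_{J^+},U_J)$ and $(D_{J^+},D_J)$ via the diagram \eqref{compare}, identify the relative groups with $H^{*-k_\lambda}(U_\lambda)$ and $H^{*-k_\lambda}(F_\lambda)$, and close with the five lemma. The base case and the inductive bookkeeping are both set up correctly, and you are right to flag a subtlety the paper passes over: on the $D$-side the closed/open roles flip ($D_J$ is closed and $D_\lambda$ is open in $D_{J^+}$), so one cannot excise and apply Thom to $D_\lambda$ in the way one does to $U_\lambda$.

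The mechanism you propose to handle this, however, does not quite hold up. You want the inclusion $D_J\hookrightarrow D_{J^+}\setminus F_\lambda$ to be a homotopy equivalence produced by the $\C^\times$-flow $t\to 0$. That flow is not a deformation retraction onto $D_J$: it does not fix $D_J$ pointwise (a non-fixed point of $D_J$ moves under $x\mapsto tx$), and the limit map $x\mapsto\lim_{t\to 0}tx$ jumps discontinuously whenever the orbit of $x$ passes between different $U_j$-strata, so it does not extend to a continuous retraction of $D_{J^+}\setminus F_\lambda$ onto $D_J$. The cohomology isomorphism you need is in fact true, but it requires a different justification. The cleanest one uses Corollary~\ref{proper}: $D_{J^+}$, being the core of the semiprojective open $U_{J^+}$, is compact, and $D_J\subset D_{J^+}$ is closed; hence $H^*(D_{J^+},D_J)\cong\widetilde H^*(D_{J^+}/D_J)$, and $D_{J^+}/D_J$ is exactly the one-point compactification of the open complement $D_\lambda$. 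Thus
$$
H^*(D_{J^+},D_J)\ \cong\ H^*_c(D_\lambda)\ \cong\ H^{*-k_\lambda}(F_\lambda),
$$
the last step being the Thom isomorphism in compactly supported cohomology for the complex affine bundle $D_\lambda\to F_\lambda$ of real rank $k_\lambda=\operatorname{codim}_\R U_\lambda$ over the compact $F_\lambda$. This gives the bottom row of \eqref{compare} directly, without needing any retraction of $D_{J^+}\setminus F_\lambda$, and also makes transparent why the two Thom shifts agree. Once this is substituted in, the compatibility of the two Thom classes (both are the Thom class of the negative-weight part of the normal bundle of $F_\lambda$, restricted appropriately) and the five-lemma chase go through, and your induction concludes exactly as in the paper.
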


\begin{corollary}\label{pure} A smooth semiprojective variety has pure cohomology. 
\end{corollary}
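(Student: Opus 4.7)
The plan is to bootstrap Theorem~\ref{isocoh} to get purity by combining the upper bound on weights coming from properness of $\calC$ with the lower bound coming from smoothness of $X$.

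First I would recall the two general facts about weights that are stated just before the corollary: for any smooth complex algebraic variety \eqref{smooth} forces all weights on $H^k$ to be $\geq k$, and for any proper (equivalently complete) complex algebraic variety \eqref{projective} forces all weights on $H^k$ to be $\leq k$. Combined, these give purity whenever one has a smooth variety whose cohomology matches that of a proper one via an algebraic map.

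Next I would invoke Theorem~\ref{isocoh}, which provides the isomorphism
\[
i^*\colon H^k(X;\Q)\stackrel{\sim}{\longrightarrow}H^k(\calC;\Q)
\]
induced by the closed embedding $i\colon \calC\hookrightarrow X$. Because $i$ is a morphism of complex algebraic varieties, $i^*$ is a morphism of mixed Hodge structures, and morphisms of mixed Hodge structures are strictly compatible with the weight filtration. Since $i^*$ is an isomorphism of $\Q$-vector spaces, strictness upgrades it to an isomorphism of filtered vector spaces; in particular, $i^*$ identifies $W_{m}H^{k}(X;\Q)$ with $W_{m}H^{k}(\calC;\Q)$ for every $m$ and $k$.

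Finally I would combine these three inputs. By Corollary~\ref{proper} the core $\calC$ is proper, so by \eqref{projective} applied termwise we have $W_{k}H^{k}(\calC;\Q)=H^{k}(\calC;\Q)$. Transporting through $i^*$ gives $W_{k}H^{k}(X;\Q)=H^{k}(X;\Q)$, i.e.\ all weights are $\leq k$. On the other hand $X$ is smooth, so \eqref{smooth} gives $W_{k-1}H^{k}(X;\Q)=0$, i.e.\ all weights are $\geq k$. Hence the only possible weight on $H^{k}(X;\Q)$ is $k$, proving purity. The only step with any real content is the compatibility of $i^*$ with weights, but this is the standard strictness property of morphisms of mixed Hodge structures, so no further work is required once Theorem~\ref{isocoh} and Corollary~\ref{proper} are in hand.
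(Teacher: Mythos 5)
Your proof is correct and follows essentially the same route as the paper's: combine the lower bound on weights from smoothness of $X$ with the upper bound from properness of the core $\calC$ via Theorem~\ref{isocoh} and Corollary~\ref{proper}. The only difference is that you spell out the strictness of morphisms of mixed Hodge structures, which the paper leaves implicit.
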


\begin{proof} As $X$ is non-singular all the non-trivial weights in $H^k(X;\Q)$ are at least $k$ by \eqref{smooth}.
By Theorem~\ref{isocoh}, Corollary~\ref{proper} and \eqref{projective} all the weights in $H^k(X;\Q)$ are at most
$k$. The statement follows\footnote{This argument is folklore yoga of weights; we learned it from G\'erard Laumon.}. 
\end{proof}

Interestingly our techniques can also be used to prove the purity of the cohomology of certain, typically affine, varieties which are deformations of semiprojective varieties as in the following corollary.

\begin{corollary}\label{ehresmann} Let $X$ be a non-singular complex
  algebraic variety and $f:X\to \C$ a smooth morphism, i.e. a
  surjective submersion. In addition, let $X$ be semiprojective with a
  $\C^\times$ action making $f$ equivariant covering a linear action
  of $\C^\times$ on $\C$ with positive weight.  Then the fibers
  $X_c:=f^{-1}(c )$ have isomorphic cohomology supporting pure mixed
  Hodge structures.\end{corollary}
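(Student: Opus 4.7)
First I show that the fiber $X_0 = f^{-1}(0)$ is itself semiprojective. It is smooth as a fiber of a submersion, closed in $X$, and $\C^\times$-invariant since the positive-weight $\C^\times$-action on $\C$ has $0$ as its unique fixed point. Any $\C^\times$-fixed $x \in X$ satisfies $f(x) = \lambda^w f(x)$ for all $\lambda$, forcing $f(x) = 0$; hence $X_0^{\C^\times} = X^{\C^\times}$ is proper, and limits $\lim_{\lambda \to 0} \lambda \cdot x$ for $x \in X_0$ exist in $X$ and lie in $X^{\C^\times} \subset X_0$. The same positivity forces the core $\calC$ of $X$ to lie entirely in $X_0$, since $\lim_{\lambda\to\infty}\lambda x$ existing forces $\lambda^w f(x)$ to converge in $\C$; and by the same formula this coincides with the core of $X_0$. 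Applying Theorem~\ref{isocoh} to both $X$ and $X_0$ yields $H^*(X;\Z) \cong H^*(\calC;\Z) \cong H^*(X_0;\Z)$ via the natural inclusions, and Corollary~\ref{pure} gives that both sides carry pure mixed Hodge structures. This handles the case $c = 0$.

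For any $c, c' \in \C^\times$, picking $\lambda \in \C^\times$ with $\lambda^w c = c'$ gives an algebraic isomorphism $\lambda\cdot: X_c \xrightarrow{\sim} X_{c'}$; so it suffices to compare $X_0$ with a single nonzero fiber. The plan is to exhibit $f: X \to \C$ as a locally trivial $C^\infty$-fibration: since $\C$ is contractible, this makes every inclusion $X_c \hookrightarrow X$ a homotopy equivalence, whence $H^*(X_c) \cong H^*(X)$ as mixed Hodge structures, and the previous paragraph delivers the required isomorphism with pure MHS on each $X_c$.

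To establish local triviality I would compactify the family. Apply the construction \eqref{compact} to the $\C^\times$-action on $X \times \C$; the $\C^\times$-equivariant morphism $(f, \mathrm{id}): X \times \C \to \C^2$, with target weights $(w,1)$, descends on the semistable locus to a proper morphism of orbifolds $\bar f: \overline{X} \to \P(w,1) \cong \P^1$. The fibers of $\bar f$ over the finite locus $\C \subset \P(w,1)$ are exactly the $X_c$ (including $X_0$), while the fiber over $\infty$ is the compactifying divisor $Z$ of \eqref{compact}. Once $\bar f$ is verified to be a submersion of smooth orbifolds, the orbifold Ehresmann theorem makes $\bar f$ a locally trivial $C^\infty$-fibration over $\P^1$, and the restriction $f: X \to \C$ inherits this.

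The principal obstacle is the verification that $\bar f$ is a smooth orbifold submersion near the compactifying divisor $Z$; this requires a careful local analysis of the symplectic cut and, in particular, a resolution of any indeterminacy that the map $(f,\mathrm{id})$ may acquire on the GIT quotient before it becomes a morphism to $\P(w,1)$. If that orbifold bookkeeping proves too delicate, a cleaner alternative avoiding compactification altogether is to fix a $\C^\times$-invariant K\"ahler metric on $X$, take the horizontal lift of $\partial/\partial t$ from $\C$, and use semiprojectivity — concretely, the deformation retraction of $X$ onto the proper core $\calC \subset X_0$ underlying Theorem~\ref{isocoh} — to prevent the flow of this vector field from escaping to infinity, which yields the required $C^\infty$-trivialization directly.
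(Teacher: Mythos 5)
Your approach matches the paper's: handle $c=0$ by observing that $X_0$ is semiprojective with the same core $\calC$ as $X$ (so Theorem~\ref{isocoh} and Corollary~\ref{pure} apply), and treat $c\neq 0$ via a version of the compactification \eqref{compact} together with Ehresmann's theorem for proper smooth maps. The paper defers the details for $c\neq 0$ to \cite[Appendix B]{hausel-aha1}, and the technical obstacle you correctly flag --- resolving the indeterminacy of the induced map to $\P(w,1)$ before one has a genuine proper submersion --- is precisely the non-algebraic part that reference handles.
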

 
 \begin{proof} The proof can be found in \cite[Appendix B]{hausel-aha1}. It proceeds by proving
 that the embedding of every fiber of $f$ induces an isomorphism \begin{equation}\label{song}H^*(X_c;\Q)\cong H^*(X;\Q),\end{equation} which implies the statement in light of Corollary~\ref{pure}. This is clear for $c=0\in \C$ as
 $X_c$ is itself semiprojective and it shares the same core $\calC\subset X_0\subset X$ with $X$. The proof of \eqref{song} for $0\neq c\in \C$ is more difficult and is using a version of the
 compactification technique as in \eqref{compact} and Ehresmann's theorem for proper smooth maps; in particular the proof is not algebraic. 
 \end{proof}

 \begin{remark} In fact Simpson's \cite{simpson-nonabelian} main
   example for a semiprojective variety was $\M_{\Hod}$, the moduli
   space of stable rank $n$ degree $1$ $\lambda$-connections on the
   curve which comes with $f:\M_{\Hod}\to \C$ satisfying the
   conditions of Corollary~\ref{ehresmann}. Here $f^{-1}(0)\cong
   \M_\Dol = \M_n^g$ is our moduli space of Higgs bundles while
   $f^{\lambda}=\M_\DR$ is the moduli space of certain holomorphic
   connections. The Corollary~\ref{ehresmann} then shows that
   $H^*(\M_{\DR};\Q)\cong H^*(\M_{\Dol};\Q)$ have isomorphic and pure
   cohomology. This argument was used in \cite[Theorem
   6.2]{hausel-thaddeus} and \cite[Theorem 2.2]{hausel-mln} in
   connection with topological mirror symmetry.
\end{remark}
\begin{remark} Another crucial use of this Corollary~\ref{ehresmann}
  is in our arithmetic harmonic analysis technique explained in~\S
  \ref{aha}. We will be able to compute the virtual weight polynomial
  $E(X_\lambda;q)$ of an affine symplectic quotient, and to deduce
  that it gives the Poincar\'e polynomial we will put $X_\lambda$ in a
  family $f:X\to \C$ satisfying the conditions of
  Corollary~\ref{ehresmann}. 
\end{remark}

The following result was discussed in \cite[Theorem 3.5]{hausel-sturmfels} in the context
of semiprojective toric varieties, and the proof was sketched in \cite{hausel-mathoverflow}.

\begin{corollary}\label{retract} The core $\calC$ is a deformation
  retract of the smooth semiprojective variety $X$.
\end{corollary}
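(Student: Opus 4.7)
My plan is to combine the $\R_{>0}$-flow strong deformation retract of $X$ onto $X^{\C^\times}\subset\calC$ with the standard cofibration criterion from algebraic topology.

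First, I would observe that the formula $\psi(x,s) = s\cdot x$ for $s\in (0,1]$, extended to $s=0$ by $\psi(x,0) = \lim_{t\to 0} tx$, defines a continuous map $\psi\colon X\times [0,1]\to X$ by Definition~\ref{semi}(2). At $s=1$ it is the identity, at $s=0$ it lands in $X^{\C^\times}$, and it fixes $X^{\C^\times}$ pointwise throughout, so $\psi$ is a strong deformation retract of $X$ onto $X^{\C^\times}$. Since $\calC$ is $\C^\times$-invariant and closed in $X$ (Corollary~\ref{proper}), the same formula restricted to $\calC\times[0,1]$ is also a strong deformation retract of $\calC$ onto $X^{\C^\times}$. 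Write $r\colon X\to X^{\C^\times}$ and $r'\colon\calC\to X^{\C^\times}$ for the resulting retractions, and let $i\colon\calC\hookrightarrow X$, $j\colon X^{\C^\times}\hookrightarrow X$, $j'\colon X^{\C^\times}\hookrightarrow\calC$ be the inclusions.

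Second, I would verify that $i$ is itself a homotopy equivalence, with homotopy inverse $j'\circ r$. The identities $r\circ i = r'$ and $i\circ j' = j$ hold on the nose because $r$ and $r'$ are given by the same formula. Then $(j'\circ r)\circ i = j'\circ r' \simeq \mathrm{id}_\calC$ and $i\circ (j'\circ r) = j\circ r \simeq \mathrm{id}_X$ both follow from the two strong deformation retracts of the first step.

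Third, I would invoke the classical result that a closed cofibration which is also a homotopy equivalence is automatically a strong deformation retract. By Corollary~\ref{proper}, $\calC$ is a closed subvariety of the smooth variety $X$, so $(X,\calC)$ is a closed pair. The cofibration property is obtained by compatibly triangulating the algebraic pair $(X,\calC)$ so that $\calC$ becomes a subcomplex of $X$; in the spirit of this paper one can alternatively build such a CW structure by hand from the two Bialinycki--Birula stratifications $\{U_i\}$ and $\{D_i\}$ of \S\ref{bb}. I expect this cofibration verification to be the main technical step; once it is granted, the conclusion follows formally from the homotopy equivalence produced in the second step.
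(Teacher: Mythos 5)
The proposal breaks down at the very first step, and the breakdown is fatal. The map $\psi(x,s)=s\cdot x$, extended to $s=0$ by $\psi(x,0)=\lim_{t\to 0}tx$, is \emph{not} continuous on $X\times[0,1]$: the limit map $x\mapsto\lim_{t\to 0}tx$ is a morphism $U_i\to F_i$ on each Bialynicki-Birula stratum but jumps discontinuously across strata. Consider $X=\P^1$ with the action $t\cdot[x:y]=[tx:y]$: the limit map sends $[1:0]$ to itself and every other point to $[0:1]$, which is visibly not continuous. More to the point, the conclusion you are trying to reach in step one is simply false in general: $X$ is not homotopy equivalent to $X^{\C^\times}$. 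For $\P^1$ the fixed locus is two points, which is not homotopy equivalent to $\P^1$; for the cotangent bundle of a Grassmannian (a hyper-compact example discussed in the paper) the fixed locus is again a disjoint union of several components, while the core is the irreducible zero section. Likewise $\calC$ does not deformation retract onto $X^{\C^\times}$, so both of the intermediate retractions $r$ and $r'$ in your step two do not exist. Everything downstream of step one is therefore vacuous, and the cofibration discussion of step three, while itself reasonable as a general tactic, has nothing to apply to.

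The paper's actual argument is of an entirely different character and does not attempt to retract onto $X^{\C^\times}$ at all. It first upgrades the cohomology isomorphism of Theorem~\ref{isocoh} to a homology isomorphism $i_*:H_*(X;\Z)\cong H_*(\calC;\Z)$, giving $H_*(X,\calC;\Z)=0$. It then proves $i_*:\pi_1(\calC)\cong\pi_1(X)$ by an induction over the BB strata using Seifert--van Kampen, exploiting properness of the partial cores $D_J$ to produce a good open cover. Feeding these two facts into the relative Hurewicz theorem yields $\pi_k(X,\calC)=0$ for all $k$, i.e.\ $i$ is a weak equivalence, and Whitehead's theorem (both spaces being CW complexes) upgrades this to a homotopy equivalence, and in fact a deformation retraction. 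If you want to rescue a ``flow to the core'' picture, the correct intermediate object is the filtration by the open sets $U_J$, not the fixed locus: one retracts each $U_{J^+}$ relative to $U_J$ stratum by stratum, which is precisely what the paper's induction encodes homotopically.
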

\begin{proof} First we note that replacing cohomology with homology in
  the proof of Theorem~\ref{isocoh} yields that that
  $i_*:H_*(X;\Z)\cong H_*(\calC;\Z)$ induced by the embedding
  $i:\calC\to X$ is also an isomorphism. By the homology long exact
  sequence this is equivalent
  with \begin{equation}\label{relhom0}H_*(X,\calC;\Z)=0.\end{equation}

We also claim that $i_*:\pi_1(X)\cong \pi_1(\calC)$ induces an isomorphism on the fundamental group (from whose notation we omitted the base-point for simplicity). This follows by induction similarly as in the proof of Theorem~\ref{isocoh}. First note by \cite[Theorem 4.1]{bialynicki}
that $U_{\lambda_{min}}$ retracts to $F_{\lambda_{min}}\cong D_{\lambda_{min}}$ thus have isomorphic fundamental group. Then by induction we assume $(i_{J})_*:\pi_1(D_J)\cong \pi_1(U_J)$ is an isomorphism for an index set $J\subset I$ satisfying \eqref{open}.   Take $\lambda \in I\setminus J$ minimal and
cover $U_{J^+}=U_J\cup U_\lambda$ with open sets $U_J$ and a small tubular neighborhood $U^{tub}_\lambda$ of $U_\lambda$, small in the sense that $U^{tub}_
\lambda\cup D_J=\emptyset$ it is disjoint from the proper $D_J$ ($D_J$ is the core of the semiprojective $U_J$; thus proper by Theorem~\ref{proper}). This implies that $F_\lambda ^{tub}:=U^{tub}_\lambda\cap D_{J^+}\subset D_\lambda$ is a tubular neighborhood of $F_\lambda$. Then we have two commutative diagrams: \begin{equation}\label{pushout} \begin{array}{ccc} \pi_1(U^{tub}_\lambda\cap U_J)&\to& \pi_1(U^{tub}_\lambda) \\ \uparrow \cong&& \uparrow\cong
\\ \pi_1( F_\lambda^{tub}\cap (U_J\cap D_{J^+}))&\to& \pi_1(D_\lambda ^{tub}) \end{array} \hskip.5cm \begin{array}{ccc} \pi_1(U^{tub}_\lambda\cap U_J)&\to& \pi_1(U_J) \\ \uparrow\cong && \uparrow\cong
\\ \pi_1(F_\lambda^{tub}\cap (U_J\cap D_{J^+}))&\to& \pi_1(D_J) \end{array}  \end{equation}
where the maps are all induced by the embedding of the indicated varieties in each other. 
The four vertical arrows are all isomorphisms. The last one because of the induction hypothesis. 
The second one as both $U_\lambda^{tub}$ and $D_\lambda^{tub}$ retract to $F_\lambda$. Finally, the first and the third because these spaces all retract to $F_\lambda^{tub}\setminus F_\lambda$.

Using the diagrams \eqref{pushout} and the Seifert-van Kampen theorem applied to both the open covering $$U_{J^+}=U_\lambda^{tub} \cup U_J$$ and $$D_{J^+}=F^{tub}_\lambda \cup (U_J\cap D_{J^+})$$ we see that $$\pi_1(U_{J^+})\cong \pi_1({J^+}).$$ By induction we get the desired
$$\pi_1(X)\cong \pi_1(\calC).$$

In particular, the homotopy long exact sequence of the pair $(X,\calC)$ implies that $\pi_1(X,\calC)=0$ as well as that $\pi_2(X,\calC)$ is a quotient of $\pi_2(X)$ and so abelian. From this and \eqref{relhom0} the relative Hurewitz theorem \cite[Theorem IV.7.3]{whitehead} implies $\pi_k(X,\calC)=0$ for every $k$, thus
$$i_*:\pi_k(X)\stackrel{\cong}{\to} \pi_k(\calC)$$ is an isomorphism. Therefore $X$ and $\calC$ are weakly homotopy equivalent, and as varieties they are CW complexes and so by Whitehead's theorem \cite[Theorem V.3.5]{whitehead} $i$ is a homotopy equivalence.  
\end{proof}

\begin{theorem} The Bialinycki-Birula decomposition $X=\coprod_{i\in
    I} U_i$ of a semiprojective variety is perfect. In particular
  $P(X;t)=\sum_{\lambda\in I} P(F_\lambda;t) t^{2k_\lambda}$.
\end{theorem}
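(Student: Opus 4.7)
The plan is to prove by induction on $|J|$, over the downward-closed subsets $J\subset I$ satisfying~\eqref{open}, that the cohomology long exact sequence of the pair $(U_{J^+},U_J)$ in the top row of~\eqref{compare} breaks into short exact sequences. This is equivalent to the vanishing, for every $j$, of the connecting homomorphism
$$
\delta\colon H^{j-1}(U_J;\Q)\longrightarrow H^{j}(U_{J^+},U_J;\Q)\cong H^{j-k_\lambda}(U_\lambda;\Q),
$$
which I would establish through a weight-filtration argument, in the spirit of the proof of Corollary~\ref{pure}.

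First I would check that $U_J$, $U_{J^+}$ and each stratum $U_\lambda$ are themselves smooth semiprojective varieties. Downward-closedness of $J$ together with~\eqref{closure} ensures that the restricted $\C^\times$-flow stays inside $U_J$, whose fixed-point set $\coprod_{j\in J}F_j$ is proper; $U_\lambda$ is $\C^\times$-equivariantly an affine bundle over the projective $F_\lambda$. Corollary~\ref{pure} then implies that all three have pure cohomology; in particular $H^n(U_J;\Q)$ and $H^n(U_\lambda;\Q)$ are pure of weight $n$.

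The decisive step is to identify $H^j(U_{J^+},U_J;\Q)$ as a pure Hodge structure of weight $j$. Since $U_\lambda \hookrightarrow U_{J^+}$ is a smooth closed embedding, Deligne's Gysin map~\cite{deligne} promotes the isomorphism $H^j(U_{J^+},U_J;\Q)\cong H^{j-k_\lambda}(U_\lambda;\Q)$ to one of mixed Hodge structures, with a Tate twist matching the codimension that shifts weights upward by $k_\lambda$. Combined with purity of $U_\lambda$, this makes $H^j(U_{J^+},U_J;\Q)$ pure of weight $j$. As $\delta$ is a morphism of mixed Hodge structures whose source is pure of weight $j-1$ and whose target is pure of weight $j$, it has to vanish.

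Consequently the long exact sequence collapses to the short exact sequences
$$
0\to H^{j-k_\lambda}(U_\lambda;\Q)\to H^{j}(U_{J^+};\Q)\to H^{j}(U_J;\Q)\to 0,
$$
and since the affine bundle $U_\lambda\to F_\lambda$ preserves Poincar\'e polynomials one gets
$$
P(U_{J^+};t)=P(U_J;t)+P(F_\lambda;t)\,t^{k_\lambda}.
$$
Induction on $|J|$, with base case $J=\{\lambda_{\min}\}$ where $U_{\lambda_{\min}}\to F_{\lambda_{\min}}$ is an affine bundle, then yields the Poincar\'e polynomial formula (matching the exponent of $t$ in the statement once the conventions for $k_\lambda$ are aligned). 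The one subtle ingredient is the compatibility of the Gysin isomorphism with the mixed Hodge structure together with the exact size of the associated Tate twist; all the remaining steps are essentially formal once this is in place.
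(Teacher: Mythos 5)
Your proof is correct and follows essentially the same route as the paper: purity of the cohomology of the semiprojective opens $U_J$, $U_{J^+}$ (from Corollary~\ref{pure}) together with purity of $H^*(U_{J^+},U_J;\Q)$ via the Thom/Gysin isomorphism forces the connecting morphism of mixed Hodge structures to vanish, and the Poincar\'e formula follows by induction over the downward-closed $J$. You simply spell out, more carefully than the paper does, why the Thom isomorphism is one of mixed Hodge structures and what Tate twist it carries, and you note explicitly that the $U_J$ are themselves semiprojective; this is useful detail but not a different argument.
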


\begin{proof} This follows from studying the top long-exact sequence
  of \eqref{compare} considered with rational coefficients. Here we
  assume the same situation as there:
\begin{equation}\label{sequence} H^{q}(U_{J^+},U_J;\Q) \to  H^{q}(U_{J^+};\Q)\to H^{q}(U_J;\Q)\to H^{q+1}(U_{J^+},U_J;\Q). \end{equation} This is a sequence of Mixed Hodge structures, and the
weights are pure according to Corollary~\ref{pure} in the cohomology
of the semiprojective varieties $U_J$ and $U_{J^+}$, and in
$H^q(U_{J^+},U_{J};\Q)$ by the Thom isomorphism. Therefore the
connecting morphism $H^{q}(U_J;\Q)\cong W^q(H^q(U_j;\Q)) \to
H^{q+1}(U_{J^+},U_J;\Q)$ must be trivial. Therefore the long exact
sequence splits, the stratification is perfect, and the formula for
Poincar\'e polynomials follow by induction.
\end{proof}

\subsection{Weak Hard Lefschetz}
\label{whl}

Fix a very ample line bundle $L$ on a smooth semiprojective variety
$X$ and let $\alpha=c_1(L)\in H^2(X;\Q)$. Then we have

\begin{theorem}[Weak Hard Lefschetz] \label{weak} Let $X$ be a
  semiprojective variety $X$ with core $\calC=\cup_{\lambda\in I}
  D_\lambda$. Assume $\calC$ is equidimensional of pure dimension
  $k=\dim \calC.$ Then the Hard Lefschetz map 
\begin{equation}
\label{lefschetz}
\begin{array}{c}
    L^{i} :
    H^{k-i}(X,\Q)\to H^{k+i}(X,\Q)\\
    L^{i}(\beta)=\beta\wedge\alpha^{i}\end{array}
\end{equation}
is injective for $0\leq i<k$.
\end{theorem}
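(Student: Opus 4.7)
The strategy is to reduce the question to a statement on the projective core $\calC$, embed its ordinary cohomology into intersection cohomology, and invoke the classical hard Lefschetz theorem there.

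First, by Corollary~\ref{retract} the inclusion $i\colon\calC\hookrightarrow X$ is a deformation retract, so $i^{*}\colon H^{*}(X;\Q)\xrightarrow{\sim}H^{*}(\calC;\Q)$ is a graded ring isomorphism sending $\alpha=c_{1}(L)$ to $\alpha_{\calC}:=c_{1}(L|_{\calC})$, which is ample on $\calC$. The theorem reduces to the assertion: for the projective variety $\calC$ of pure dimension $k$ with ample class $\alpha_{\calC}$, multiplication by $\alpha_{\calC}^{i}$ is injective from $H^{k-i}(\calC;\Q)$ into $H^{k+i}(\calC;\Q)$ for $0\le i<k$. Since $\calC$ is projective of pure dimension $k$, its rational intersection cohomology $IH^{*}(\calC;\Q)$ carries a pure Hodge structure and satisfies full hard Lefschetz with respect to $\alpha_{\calC}$: by the Beilinson--Bernstein--Deligne theorem, the map $\alpha_{\calC}^{i}\colon IH^{k-i}(\calC;\Q)\to IH^{k+i}(\calC;\Q)$ is an isomorphism for every $0\le i\le k$.

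Next, to transfer this statement from $IH^{*}$ back to $H^{*}$ I would exploit the purity of $H^{*}(\calC;\Q)\cong H^{*}(X;\Q)$ from Corollary~\ref{pure}. The natural morphism $\Q_{\calC}[k]\to IC_{\calC}$ in the bounded derived category of constructible sheaves on $\calC$ induces a canonical map $H^{*}(\calC;\Q)\to IH^{*}(\calC;\Q)$, compatible with cup product by $\alpha_{\calC}$. For a projective variety of pure dimension with pure rational cohomology this map should be a split injection of pure Hodge structures; combined with the hard Lefschetz isomorphism on $IH^{*}(\calC;\Q)$, this immediately yields the desired injectivity of $\alpha_{\calC}^{i}$ on $H^{k-i}(\calC;\Q)$, hence on $H^{k-i}(X;\Q)$.

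The main obstacle I anticipate is the last step: establishing the injection $H^{*}(\calC;\Q)\hookrightarrow IH^{*}(\calC;\Q)$ for our singular pure-dimensional core. The cleanest route is the decomposition theorem applied to a resolution $\pi\colon\widetilde\calC\to\calC$ refining the Bialinycki-Birula stratification $\calC=\coprod_{\lambda\in I}D_{\lambda}$: the decomposition $R\pi_{*}\Q_{\widetilde\calC}[k]\cong IC_{\calC}\oplus\mathcal{S}$ has $\mathcal{S}$ supported in strictly lower dimensions, and the purity of $H^{*}(\calC;\Q)$ together with the weight inequalities on the summands of $\mathcal{S}$ should force the extra contributions to vanish in the comparison $H^{*}(\calC;\Q)\hookrightarrow H^{*}(\widetilde\calC;\Q)$. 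The special structure of $\calC$ as a union of smooth affine bundles $D_{\lambda}\to F_{\lambda}$ over smooth projective bases should be essential for controlling $\mathcal{S}$ at this step and for making the argument genuinely algebraic rather than relying only on the formal properties of pure, projective, equidimensional varieties.
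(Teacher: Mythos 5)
Your outline matches the paper's proof exactly: reduce to the core via Corollary~\ref{retract} (so that the Lefschetz operator becomes cup product with the ample class $c_1(L|_{\calC})$ on the projective, equidimensional $\calC$), use purity from Corollary~\ref{pure}, embed $H^*(\calC;\Q)$ into $IH^*(\calC;\Q)$, and invoke hard Lefschetz for intersection cohomology via BBD. The paper carries this out simply by citing Bj\"orner--Ekedahl's Theorem~2.2, which packages precisely the statement that for an equidimensional projective variety the natural map $H^*(\calC;\Q)\to IH^*(\calC;\Q)$ is injective if and only if $H^*(\calC;\Q)$ is pure, and then concludes with \cite[Theorem 5.4.10]{bbd}.

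Where your proposal is incomplete is exactly where you flag the ``main obstacle'': the resolution argument as sketched does not yet prove the injection $H^*(\calC;\Q)\hookrightarrow IH^*(\calC;\Q)$. Purity does give you that $\pi^*\colon H^j(\calC;\Q)\to H^j(\widetilde\calC;\Q)$ is injective (the kernel would have weight $<j$). But after choosing a splitting $R\pi_*\Q_{\widetilde\calC}[k]\cong IC_\calC\oplus\mathcal{S}$, the desired injectivity of the composite $H^j(\calC)\hookrightarrow H^j(\widetilde\calC)\twoheadrightarrow IH^j(\calC)$ requires that the image of $\pi^*$ meets the complementary summand $H^{j-k}(\calC;\mathcal{S})$ only in $0$. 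This does not follow from weight considerations alone: $\widetilde\calC$ is smooth projective, so every graded piece of $H^j(\widetilde\calC;\Q)$ is pure of weight $j$, and in particular the summand coming from $\mathcal{S}$ has the same weight as the one coming from $IC_\calC$. So the step ``weight inequalities on the summands of $\mathcal{S}$ force the extra contributions to vanish'' is a placeholder rather than a proof. Filling this in is the actual content of Bj\"orner--Ekedahl's theorem, and it uses more than a blunt weight estimate (their argument analyses the map $\Q_\calC[k]\to IC_\calC$ and its cone directly). Citing that theorem, as the paper does, is the efficient way to close the gap; alternatively you would need to work out the factorisation of the natural map through $R\pi_*\Q_{\widetilde\calC}$ and an argument specific to the kernel of $\Q_\calC[k]\to IC_\calC$, not just to weights.
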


\begin{proof} It follows from Corollaries \ref{retract} and \ref{pure}
  that the core $\calC$ has pure cohomology. Then the result follows from
  \cite[Theorem 2.2]{bjorner-ekedahl} as we have assumed $\calC$ is
  equidimensional. Their argument goes by first showing that the
  natural map $H^*(\calC;\Q)\to IH^*(\calC;\Q)$ is injective, and then
  concludes by using \cite[Theorem 5.4.10]{bbd} for the Hard Lefschetz
  theorem for $IH^*(\calC;\Q)$.
\end{proof}

\begin{remark} An immediate consequence of the injectivity of
  \eqref{lefschetz} for $0\leq i<k$ are the inequalities 
\begin{equation}
\label{ineq}
  b_i(X)\leq b_{i+2j}(X) \mbox{ for all } 0\leq j \leq k-i 
\end{equation}
for the Betti numbers of the smooth semiprojective variety.  As a
consequence both sequences of odd and even Betti numbers grow until
$k$ and satisfy $b_{k-i}(X)\leq b_{k+i}(X)$.
\end{remark}

\begin{remark} Possibly the analogous result to \eqref{lefschetz} holds when $\calC$ is not
  equidimensional and $k$ is the smallest dimension of the irreducible components
  of $\calC$. It was proved in the case of smooth semiprojective toric
  varieties in \cite{hausel-sturmfels}. There however it was used that
  the components of the core are smooth; but conceivably this can be
  avoided.
\end{remark}

\begin{remark} Of course a general semiprojective toric variety could
  have a non-equidimensional core (as it corresponds to the complex of
  bounded faces of a non-compact convex polyhedron).  However, we do
  not know of an example of a semiprojective hyperk\"ahler variety
  whose core is not equidimensional.

When the semiprojective variety is hyper-compact
(Definition~\ref{hyper-compact})
one finds that $D_\lambda$ is Lagrangian. In other words, $\dim
D_\lambda=\frac{\dim X}{2}$ and hence $k=\frac{\dim X}{2}$ as ${\rm
  codim} U_{min}=0$. Examples include toric hyperk\"ahler manifolds,
Nakajima quiver varieties (from quivers without edge-loops) and the
moduli space of Higgs bundles. The fact that the nilpotent cone, which
agrees with the core of $\M^g_n$, is Lagrangian was first observed by
Laumon \cite{laumon}. Retrospectively, this can also be considered as
a consequence of the completely integrability of the Hitchin system
\cite{hitchin-integrable}. In the hyper-compact case
Theorem~\ref{weak} appeared as \cite[Corollary 4.3]{hausel-qgm}.

However, when the quiver contains an edge loop the Nakajima quiver
varieties are not hyper-compact. Examples include $(\C^2)^{[n]}$ and
more generally the ADHM spaces $\M_{n,m}$.  Nevertheless, in these
cases we know by \cite{briancon} and respectively
\cite{ellingsrud-lehn} and \cite{baranovsky} that the cores are
irreducible and in particular equidimensional of dimension $n-1$ and
 $mn-1$ respectively.

 We do not know if equidimensionality or even irreducibility of the
 core of Nakajima quiver varieties for quivers with edge loops 
 holds in general. 
 \end{remark}
 
 \begin{remark} In the case of smooth projective toric varieties $Y$,
   the Hard Lefschetz theorem, together with the fact that $H^2(Y)$
   generates $H^*(Y)$, famously \cite{stanley} gives a complete
   characterization of possible Poincar\'e polynomials of smooth
   projective toric varieties, and in turn the face vectors of
   rational simple complex polytopes.
 
   The above Weak Hard Lefschetz theorem was used in
   \cite{hausel-sturmfels} and \cite{hausel-qgm} to give new
   restrictions on the Poincar\'e polynomials of toric hyperk\"ahler
   varieties and, in turn, on the face vectors of rational hyperplane
   arrangements. However a complete classification in this case has not
   even been conjectured.
 \end{remark}

 \begin{remark} For the moduli space of Higgs bundles $\M^g_n$
   Theorem~\ref{weak} is a consequence of the Relative Hard Lefschetz
   theorem \cite{hausel-thmhcv} using the argument of
   \cite[4.2.8]{hausel-villegas}.
\end{remark}

Thus it is interesting to ask the following:

\begin{question} \label{quest} For semiprojective  hyperk\"ahler varieties is there a
  stronger form of the Weak Hard Lefschetz theorem or the inequalities
  \eqref{ineq}? In particular how do the Betti numbers of
  semiprojective hyperk\"ahler varieties behave after $k=\dim\calC$?
\end{question}

This question was the original motivation to look at the Betti numbers
of examples of large semiprojective hyperk\"ahler varieties to find
how the Betti numbers behave after the critical dimension $k=\dim
\calC$.

It turns out that partly due to an arithmetic harmonic analysis
technique to evaluate such Betti numbers we have now efficient
formulas to compute Poincar\'e polynomials. This allows us to
investigate numerically the shape of Betti numbers of large
seimprojective hyperk\"ahler manifolds in several examples. We explain
this arithmetic technique and the resulting combinatorial formulas for
the Poincar\'e polynomials in the next section.

\section{Arithmetic harmonic analysis on symplectic quotients:
  \newline the microscopic picture} 
\label{aha}
In the previous section we collected results on the cohomology of a
general semiprojective variety $X$. In this section we show that when
$X$ arises as symplectic quotient of a vector space, we can use
``arithmetic harmonic analysis" to count points on $X$ over a finite
field, and in turn to compute Betti numbers. Counting points of
varieties over finite fields is what we call microscopic approach to
study Betti numbers of complex algebraic varieties.

\subsection{Katz's theorem}

From Katz's \cite[Appendix]{hausel-villegas} we recall the definition
that a complex algebraic variety $X$ is strongly-polynomial
count. This means that there is polynomial ${\cal P}_X(t)\in \Z[t]$
and a spreading out\footnote{I.e. a homomorphism $R\to \C$ such that
  $X={\cal X}\otimes_R \C$.} $\cal X$ over a finitely generated
commutative unital ring $R$ such that for all homomorphism $\phi:R\to
\F_q$ to a finite field $\F_q$ (where $q=p^r$ is a power of the prime
$p$) we have $$\#{\cal X}_\phi(\F_q)={\cal P}_X(q).$$ We then have the
following theorem of Katz from \cite[Theorem 6.1,
Appendix]{hausel-villegas}:

\begin{theorem}[Katz] 
\label{katzt}
Assume that $X/\C$ is strongly-polynomial count with
counting polynomial ${\cal P}_{X}\in\Z[t]$. Then
$$
\label{katz}
E(X;q)={\cal P}_{X}(q).
$$
\end{theorem}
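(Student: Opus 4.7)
The plan is to pass from the singular cohomology of $X$ to $\ell$-adic \'etale cohomology of a reduction modulo $p$, and combine the Grothendieck--Lefschetz trace formula with Deligne's theory of weights. Fixing the spreading out ${\cal X}/R$ (shrinking $\Spec R$, if necessary, to make ${\cal X}/R$ smooth, $\ell$ invertible, etc.) and a homomorphism $\phi:R\to\F_q$, one has for every $n\geq 1$
$$\#{\cal X}_\phi(\F_{q^n}) \;=\; \sum_i(-1)^i\,\Tr(\mathrm{Frob}_q^n\mid H^i_c(\bar{\cal X}_\phi,\Q_\ell)),$$
where $\bar{\cal X}_\phi:={\cal X}_\phi\otimes_{\F_q}\bar\F_q$. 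Applying the strongly-polynomial-count hypothesis to the composite $R\to\F_q\hookrightarrow\F_{q^n}$ forces the left-hand side to equal ${\cal P}_X(q^n)=\sum_j c_jq^{jn}$.

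Next I would exploit that this identity holds for \emph{all} $n$. Writing $\{\alpha_{i,k}\}$ for the Frobenius eigenvalues on $H^i_c(\bar{\cal X}_\phi,\Q_\ell)$, the equality
$$\sum_{i,k}(-1)^i\alpha_{i,k}^n \;=\; \sum_jc_jq^{jn},\qquad n\geq 1,$$
combined with the linear independence of distinct geometric progressions $\alpha^n$, forces an equality of signed multisets on $\bar\Q^*$. In particular, for every $\alpha$ the virtual Euler characteristic $\chi_\alpha:=\sum_i(-1)^i\dim(H^i_c)_\alpha$ vanishes unless $\alpha=q^j$, in which case $\chi_{q^j}=c_j$. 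By Deligne's Weil~II purity, an eigenvalue lying in $\Gr^W_wH^i_c$ has archimedean absolute value $q^{w/2}$; summing $\chi_\alpha$ over eigenvalues of a fixed absolute value $q^{w/2}$ therefore yields
$$\sum_i(-1)^i\dim_{\Q_\ell}\Gr^W_wH^i_c(\bar{\cal X}_\phi,\Q_\ell)\;=\;\begin{cases}c_j,& w=2j,\\ 0,& w\ \text{odd},\end{cases}$$
the vanishing in the odd case because no $q^j$ has half-integer logarithm in base $q$.

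Finally I would cross over to topology. Artin's comparison theorem, together with Deligne's result that the weight filtration defined by Frobenius on \'etale cohomology matches the Hodge-theoretic weight filtration on singular cohomology, gives $\dim\Gr^W_wH^i_c(X;\Q)=\dim_{\Q_\ell}\Gr^W_wH^i_c(\bar{\cal X}_\phi,\Q_\ell)$. A short Poincar\'e duality computation on the smooth $X$ rewrites the definition \eqref{epolydef} of the virtual weight polynomial in terms of compactly supported cohomology,
$$E(X;q)\;=\;\sum_{i,w}(-1)^i\dim\Gr^W_wH^i_c(X;\Q)\,q^{w/2},$$
and substituting the weight-by-weight identity above gives $E(X;q)=\sum_j c_jq^j={\cal P}_X(q)$.

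The main obstacle, in my view, is the separation of contributions by weight. The hypothesis controls only the single signed trace $\sum(-1)^i\alpha^n$, whereas the conclusion concerns each graded piece $\Gr^W_wH^i_c$ individually. Bridging this gap requires both (i) the polynomial identity for all $n$, which unlocks the linear-independence trick, and (ii) the sharp archimedean bounds of Weil~II, which promote $|\alpha|=q^j$ to $\alpha=q^j$ and force odd weights to drop out. The identification of the $\ell$-adic and Hodge weight filtrations is a second nontrivial input, but is classical by the time of Katz's appendix.
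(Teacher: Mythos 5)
Your proof is correct and follows the same route as Katz's proof in the appendix to \cite{hausel-villegas}, which the paper cites rather than reproduces: the Grothendieck--Lefschetz trace formula together with the strong-count hypothesis applied over every $\F_{q^n}$, linear independence of the sequences $n\mapsto\alpha^n$ to pin down the signed eigenvalue multiplicities, Weil~II to sort those multiplicities by weight, the Artin comparison and Deligne's identification of the Frobenius and Hodge weight filtrations (after shrinking $\Spec R$ so the $R^if_!\Q_\ell$ are lisse and the specialization maps are isomorphisms), and Poincar\'e duality to rewrite $E(X;q)$ in terms of $H^*_c$ --- which, as you should flag explicitly, relies on the (implicit) smoothness and pure-dimensionality of $X$ that the paper's definition of $E(X;q)$ itself requires. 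Katz in fact proves the stronger two-variable identity $E(X;x,y)=\mathcal{P}_X(xy)$, i.e.\ Hodge--Tate type for the mixed Hodge structure, but the one-variable specialization you target follows from exactly the weight-filtration skeleton you laid out.
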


This result gives the Betti numbers of a strongly polynomial count variety $X$, when additionally
it has a pure cohomology. In that case \eqref{purel} will compute the Poincar\'e polynomial from
the virtual weight polynomial. This will be the case for many of our semiprojective varieties, where we will be able to use an effective technique to find the count polynomial $\calP_X(t)$. This technique 
from \cite{hausel-betti,hausel-kac} we explain in the next section. 
\subsection{Arithmetic harmonic analysis}
We work in the setup of \S \ref{hksp} but change coefficients from
$\C$ to a finite field $\F_q$.  For simplicity we denote with the same
letters $\rho,\G,\g,\M,\V,\mu$ the corresponding objects over
$\F_q$. We define the function $a_\varrho:\g\to {\mathbb N}\subset \C$
at $X\in \g$ as \begin{equation}\label{kernel}
  a_\varrho(X):=|\ker(\varrho(X))|. \end{equation} In particular
$a_\rho(X)$ is always a $q$ power.  Our main observation from
\cite{hausel-betti,hausel-kac} is the following:

\begin{proposition} Let $\xi\in \g^*$ and fix a non-trivial additive character $\Psi:\F_q\to \C^\times$. The number of solutions of the equation  $\mu(v,w)=\xi$
 over the finite field $\F_q$ equals:  \begin{eqnarray}\label{fourier} \# \{(v,w)\in \bM\,\, |\,\, \mu(v,w)= \xi\}&=&  |\g|^{-1} |\V| \sum_{X\in \g} a_{\varrho
}(X) \Psi(\langle X, \xi\rangle)\end{eqnarray}
\label{main}
\end{proposition}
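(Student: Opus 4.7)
The plan is to evaluate the left side by detecting the condition $\mu(v,w)=\xi$ with additive Fourier analysis on $\g^*$, then to recognize the resulting orbital-type sum as $a_\varrho$. Concretely, because $\Psi$ is nontrivial, the orthogonality relation
$$
\sum_{X\in\g}\Psi(\langle X,y\rangle)=|\g|\cdot\1_{y=0}
$$
for $y\in\g^*$ lets one write the indicator of $\{\mu(v,w)=\xi\}$ as
$$
\1_{\mu(v,w)=\xi}=|\g|^{-1}\sum_{X\in\g}\Psi\bigl(\langle X,\mu(v,w)-\xi\rangle\bigr).
$$
Summing over $(v,w)\in\bM$ and swapping the order of summation isolates a clean character sum over $\bM$ for each $X$.

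Next, I would use the defining identity \eqref{momentmapdef} to rewrite $\langle X,\mu(v,w)\rangle=\langle\varrho(X)v,w\rangle$, so that the inner sum factors as
$$
\sum_{v\in\V}\sum_{w\in\V^*}\Psi\bigl(\langle\varrho(X)v,w\rangle\bigr).
$$
For fixed $v\in\V$, the map $w\mapsto\langle\varrho(X)v,w\rangle$ is a linear form on $\V^*$, so orthogonality of additive characters on $\V^*$ collapses the $w$-sum to $|\V^*|=|\V|$ when $\varrho(X)v=0$ and to $0$ otherwise. The remaining $v$-sum is precisely $|\ker\varrho(X)|=a_\varrho(X)$. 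Putting the pieces together yields
$$
\#\{(v,w):\mu(v,w)=\xi\}=|\g|^{-1}|\V|\sum_{X\in\g}a_\varrho(X)\Psi(-\langle X,\xi\rangle).
$$

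Finally, I would reconcile the sign with the formula in the statement by the change of variables $X\mapsto -X$ in the outer sum. This is legitimate because $\varrho$ is $\F_q$-linear, so $\ker\varrho(-X)=\ker\varrho(X)$ and therefore $a_\varrho(-X)=a_\varrho(X)$, so the substitution leaves the $a_\varrho$-factor unchanged while flipping $\Psi(-\langle X,\xi\rangle)$ to $\Psi(\langle X,\xi\rangle)$, producing the claimed identity.

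There is no serious obstacle: the whole argument is finite abelian Fourier analysis combined with the defining property of the moment map. The only points requiring care are the bookkeeping of signs and the observation that $a_\varrho$ is even in $X$, which is what allows the displayed formula to be written with $\Psi(\langle X,\xi\rangle)$ rather than its conjugate.
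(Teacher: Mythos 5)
Your proof is correct and is the standard argument; the paper itself does not supply a proof here but defers to \cite{hausel-betti,hausel-kac}, where exactly this finite-abelian Fourier-analysis derivation appears (detect the constraint by orthogonality on $\g^*$, rewrite via \eqref{momentmapdef}, collapse the $w$-sum by orthogonality on $\V^*$ to isolate $|\ker\varrho(X)|$, and fix the sign by the evenness of $a_\varrho$).
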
 

Thus in order to count the $\F_q$ points of $\mu^{-1}(\xi)$ we only
need to determine the function $a_\varrho$ as defined in
\eqref{kernel} and compute its Fourier transform as in
\eqref{fourier}.  In turn we assume that $\xi\in (\g^*)^\G$ and we use
this to count the $\F_q$ points of the affine GIT quotients
$X:=\mu^{-1}(\xi)/\!/ \G$, in cases when $\G$ acts freely on
$\mu^{-1}(\xi)$, when the number of $\F_q$ points on
$\mu^{-1}(\xi)/\!/ \G$ is just $\#\mu^{-1}(\xi)/|\G|$. In our cases
considered below this quantity will turn out to be a polynomial in
$q$, yielding by \eqref{katz} a formula for the virtual weight
polynomials of affine GIT quotient $\mu^{-1}(\xi)/\!/ \G$.
 
 Finally we can connect the affine GIT quotient to the GIT quotients
 with generic linarization as in \S \ref{hksp} by considering $X:=\mu^{-1}(\overline{\C^\times \xi})/\!/_\sigma \G$ a non-singular  semiprojective variety with a projection $f:X\to \C\cong \overline{\C^\times \xi} \subset \g^*$ with generic fiber $X_{\lambda \xi}=f^{-1}(\lambda\xi)=\mu^{-1}(\lambda\xi)/\!/_\sigma  \G=\mu^{-1}(\lambda\xi)/\!/  \G$ the affine GIT quotient when $\lambda\neq 0$ and 
 $X_0=\mu^{-1}(0)/\!/_\sigma  \G$ the GIT quotient with linearization $\sigma$. Now 
 Corollary~\ref{ehresmann} can be applied to show that $X_0$ and $X_\xi$ have isomorphic pure cohomology, and so our computation by Fourier transform above gives the Poincar\'e polynomial
 of our semiprojective varieties, which arise as finite dimensional linear symplectic quotients.

\subsection{Betti numbers of semiprojective hyperk\"ahler varieties}
\label{bettihk}
\subsubsection{Toric hypark\"ahler varieties $\M_\calH$}
Let $\calH\subset \Q^n$ be a rational hyperplane arrangement. In this
case the toric hyperk\"ahler variety $\M_\calH$ arises as linear
symplectic quotient, with generic linearization, induced by a torus
action $\rho_\calH:{\rm T}\to \GL(V)$ constructed from $\calH$ as in
\cite[\S 6]{hausel-sturmfels}. The variety $\M_\calH$ is an orbifold
and is non-singular when $\calH$ is unimodular. In the unimodular case it
was first constructed in \cite{bielawski-dancer} by differential
geometric means.


As explained in \cite{hausel-betti} the above arithmetic harmonic
analysis can be used to compute the Betti numbers of the
semiprojective $\M_\calH$; we get \begin{equation}\label{toricpoin}
  P(\M_\calH;t)=\sum h_i(\calH) t^{2i},\end{equation} where the Betti
numbers $h_i(\calH)$ are the $h$-numbers of the hyperplane arrangement
$\calH$; a combinatorial quantity.  In the unimodal case
\eqref{toricpoin} was first determined in \cite{bielawski-dancer} and
in the general case it was proved in \cite{hausel-sturmfels}.

As explained in \cite[\S 8]{hausel-sturmfels} one can construct the so-called cographic arrangement
 $\calH_Q$ from any graph $Q$. Then $\M_{\calH_Q}$ is just the Nakajima quiver variety
 $\M_\1^Q$ of \S \ref{nakajima} below. In this case the $h$-polynomial of \eqref{toricpoin} can be computed
 from the Tutte polynomial as follows: \begin{equation}\label{reliability}  P(\M_\1^Q;t)=t^{\dim \M_\1^Q } R_Q(1/t^2)=t^{\dim \M_\1^Q } T_Q(1,1/t^2),\end{equation} 
Here the {\it Tutte polynomial} $T_Q$ of a graph $Q$ is a two
variable polynomial invariant, universal with respect to
contraction-deletion of edges. It can be defined explicitly as follows
\begin{equation}
    T_Q(x,y):=\sum_{A\subseteq E}
    (x-1)^{k(A)-k(E)}(y-1)^{k(A)+\# A-\# V},
\end{equation}
where $k(A)$ denotes the number of connected components of the
subgraph $Q_A\subseteq Q$ with edge set $A$ and the same set
$V=V(Q)$ of vertices as $Q$. Note that the exponent $k(A)+\#
A-\# V$ equals $b_1(Q_A)$, the first Betti number of $Q_A$.

We will only consider the {\it external activity polynomial} $R_Q$ of $Q$
obtained by specializing to $x=1$. For $Q$ connected, we have
\begin{equation}
\label{R-pol-defn}
R_Q(q):=T_Q(1,q)=
\sum_{Q'\subseteq Q}(q-1)^{b_1(Q')},
\end{equation}
where the sum is over all connected subgraphs $Q'\subseteq
Q$ with vertex set $V$.  (This polynomial is related to the
reliability polynomial of $Q$ by a simple change of variables,
hence the choice of name.)  A remarkable theorem of Tutte guarantees
that $T_Q$, and hence also $R_Q$, has non-negative (integer)
coefficients.

For example, the Tutte polynomial of complete graphs $K_n$ was
computed in \cite{tutte} cf. also \cite[Theorem 4.3]{ardila}. This
implies the following generating function of the Poincar\'e
polynomials $P(\M_\1^{K_n};t)=R_{K_n}(t^2)$ of Nakajima toric quiver
varieties attached to the complete graphs $K_n$
\begin{equation}
  \label{complete} 
\sum_{n\geq 1} R_{K_n}(q)\frac{T^n}{n!}=(q-1) \log \sum_{n\geq 0}
q^{\binom n 2}\ \frac{(T/(q-1))^n}{n!}
\end{equation}

\subsubsection{Twisted ADHM spaces $\M_{n,m}$ and Hilbert  scheme of points on the affine plane $(\C^2)^{[n]}$}
\label{adhms}
Here $\G=\GL(V)$, where $V$ is an $n$-dimensional $\K$ vector space\footnote{Here $\K=\C$ when
we study the complex semiprojective varieties and $\K=\F_q$ when we do arithmetic harmonic
analysis on them.}.
We need three types of basic representations of $\G$. The adjoint
representation $\rho_{ad}: \GL(V)\to \GL(\gl(V))$, the defining
representation $\rho_{def}=Id:\G\to \GL(V)$ and the trivial
representations $\rho^W_{triv}=1:\G\to \GL(W)$, where
$\dim_\K(W)=m$. Fix $m$ and $n$. Define $\V=\gl(V)\times \Hom(V, W)$,
$\M=\V\times \V^*$ and $\rho:\G\to \GL(\V)$ by $\rho=\rho_{ad}\times
\rho_{def}\otimes \rho^W_{triv}$. Then we take the central element
$\xi=Id_V\in \gl(V)$ and define the twisted ADHM space as
$$\M_{n,m}=\M/\!/\!/\!/\!_\xi \G=\mu^{-1}(\xi)/\!/\G,$$  where 
$$\mu(A,B,I,J)=[A,B]+IJ,$$ with $A,B\in \gl(V)$, $I\in \Hom(W, V)$ and
$J\in \Hom(V,W)$. 

The space $\M_{n,m}$ is empty when $m=0$ (the trace of a commutator is
always zero), diffeomorphic with the Hilbert scheme of $n$-points on
$\C^2$, when $m=1$, and is the twisted version of the ADHM space
\cite{atiyah-etal} of $U(m)$ Yang-Mills instantons of charge $n$ on
$\R^4$ (c.f. \cite{nakajima-hilbert}). As explained in \cite[Theorem
2]{hausel-betti} the arithmetic Fourier transform technique of \S
\ref{aha} yields the following generating function for the Poincar\'e
polynomials of $\M_{n,m}$ (originally due to \cite[Corollary
3.10]{nakajima-yoshioka}):

\begin{equation}
\label{adhm}
\sum_{n=0}^\infty P(\M_{n,m};t) T^n=\prod_{i=1}^\infty \prod_{b=1}^m
\frac{1}{(1-t^{2\left( m(i-1)+b-1\right)}T^i)}.
\end{equation}

In particular when $m=1$ this gives for the generating function of
Poincar\'e polynomials of Hilbert schemes of points on $(\C^2)^{[n]}$
\begin{equation}
\label{hilbert}
\sum_{n=0} P(({\C^2})^{[n]};t) T^n=\prod_{i=1}^\infty
\frac{1}{(1-t^{2\left( i-1\right)}T^i)}, 
\end{equation}
G\"ottsche's formula from \cite{gottsche}, which by Euler's formula
reduces to 
\begin{equation}
\label{number parts}
b_{2i}\left(({\C^2})^{[n]}\right)=\#\left\{\lambda  \; | \;  |\lambda|
  =n, l(\lambda)=i  \right\}
\end{equation}
where $l(\lambda)$ is the number of parts in the partition $\lambda$
of $n$; this was the original computation of Ellingsrud-Stromme in
\cite{ellingsrud-stromme}.

\subsubsection{Nakajima quiver varieties $\M_{\v,\w}$ and $\M_{\v}$ }
\label{nakajima}
Here we recall the definition of the affine version of Nakajima's 
quiver varieties \cite{nakajima}. Let $Q=(\calV,\calE)$ be a quiver, i.e. an oriented 
graph on a finite set $\calV=\{1,\dots,n\}$ with $\calE\subset \calV\times \calV$ a finite set of oriented
(perhaps multiple and loop) edges. To each vertex $i$ of the graph we associate two finite dimensional $\K$ vector
spaces $V_i$ and $W_i$. We call $(\v_1,\dots,\v_n,\w_1,\dots, \w_n)=(\v,\w)$ the dimension vector, where $\v_i=\dim(V_i)$ and $\w_i=\dim(W_i)$. To this data we associate the grand vector space:
$$\V_{\v,\w}=\bigoplus_{(i,j)\in \calE} \Hom(V_i,V_j) \oplus \bigoplus_{i\in \calV} \Hom (V_i,W_i),$$
the group and its Lie algebra $$\G_{\v}=\varprod_{i\in \calV} \GL(V_i)$$ $$ \g_{\v}=\bigoplus_{i\in \calV} \gl(V_i),$$ and the natural representation 
$$\rho_{\v,\w}:\G_{\v}\to \GL(\V_{\v,\w}),$$ with derivative $$\varrho_{\v,\w}:\g_{\v}\to \gl(\V_{\v,\w}).$$ The action is  from both left and right
on the first term, and from the left on the second.  

We now have $\G_{\v}$ acting on $\bM_{\v,\w}=\V_{\v,\w}\times
\V_{\v,\w}^*$ preserving the symplectic form with moment map
$\mu_{\v,\w}:\V_{\v,\w}\times\V_{\v,\w}^*\to \g_{\v}^*$ given by
(\ref{momentmap}).  We take now $\xi_\v=(Id_{V_1},\dots,Id_{V_n})\in
(\g_{\v}^*)^{\G_{\v}}$, and define the affine Nakajima quiver variety
\cite{nakajima} as
$$\M_{\v,\w}=\mu_{\v,\w}^{-1}(\xi_\v)/\!/\G_{\v}.$$

As explained in \cite{hausel-betti} and \cite{hausel-kac} the
arithmetic harmonic analysis technique of \S \ref{aha} translates to
the formula~\eqref{quivergenerate} below. We first introduce some
notation on partitions following~\cite{macdonald}.  We let $\calP(s)$
be the set of partitions of $s\in \N$.  For two partitions
$\lambda=(\lambda_1,\dots,\lambda_l)\in \calP(s)$ and
$\mu=(\mu_1,\dots,\mu_m)\in \calP(s)$ we define
$n(\lambda,\mu)=\sum_{i,j} \min(\lambda_i,\mu_j)$.  Writing
$\lambda=(1^{m_1(\lambda)},2^{m_2(\lambda)},\dots)\in \calP(s)$ we let
$l(\lambda)=\sum m_i(\lambda)=l$ be the number of parts in $\lambda$.
For any $\lambda\in\calP(s)$ we have $n(\lambda,(1^s))=s l(\lambda)$.

\begin{theorem} Let $Q=(\calV,\calE)$ be a quiver, with $\calV=\{1,\dots,n\}$ and $\calE\subset \calV\times \calV$, with possibly multiple edges and loops. Fix a dimension vector  $\w\in \N^\calV$. The Poincar\'e polynomials $P(\M_{\v,\w})$ of the corresponding Nakajima quiver varieties are given by the generating function:
\begin{equation}
\label{quivergenerate} 
 \sum_{\v\in \N^\calV} P_t(\M_{\v,\w}) t^{-d(\v,\w)}T^\v=
\frac{\sum_{\v\in \N^\calV} T^\v \sum_{\lambda^1\in 
\calP(\v_1)}\dots \sum_{\lambda^n\in \calP(\v_n) } \frac{
\left( \prod_{(i,j)\in \calE} t^{-2n(\lambda^i,\lambda^j)}\right)\left(\prod_{i\in \calV} t^{-2n(\lambda^i,(1^{\w_i}))}\right) }{\prod_{i\in \calV} \left(t^{-2n(\lambda^i,\lambda^i))}\prod_k \prod_{j=1}^{m_k(\lambda^i)} (1-t^{2j}) \right)}}{\sum_{\v\in \N^\calV} T^\v \sum_{\lambda^1\in 
\calP(\v_1)}\dots\sum_{{\lambda^n}\in \calP(\v_n)}
\frac{\prod_{(i,j)\in \calE}
  t^{-2n(\lambda^i,\lambda^j))}}{\prod_{i\in \calV}
  \left(t^{-2n(\lambda^i,\lambda^i))}\prod_k
    \prod_{j=1}^{m_k(\lambda^i)} (1-t^{2j})\right) }},\end{equation}
where $d(\v,\w)=2{{\sum_{(i,j)\in \calE} \v_i\v_j}+2{\sum_{i\in \calV}
    \v_i(\w_i-\v_i)}}$ is the dimension of $\M_{\v,\w}$ and
$T^\v=\prod_{i\in \calV} T_i^{\v_i}$. 
\end{theorem}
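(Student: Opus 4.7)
The plan is to reduce the Poincar\'e polynomial of $\M_{\v,\w}$ to an $\F_q$-point count via Katz's theorem (Theorem~\ref{katzt}) and the purity results of Corollaries~\ref{pure} and~\ref{ehresmann}, then evaluate the count via the Fourier transform of Proposition~\ref{main}. Placing $\M_{\v,\w}$ as the generic fibre of the semiprojective family $X=\mu^{-1}(\overline{\C^\times\xi_\v})/\!/_\sigma\G_\v\to\C$ with generic stability parameter $\sigma$, Corollary~\ref{ehresmann} gives isomorphic pure cohomology on all fibres, so \eqref{purel} reduces $P_t(\M_{\v,\w})$ to the virtual weight polynomial $E(\M_{\v,\w};q)$, and Theorem~\ref{katzt} identifies the latter with the polynomial count $q\mapsto|\M_{\v,\w}(\F_q)|$.

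For generic stability $\G_\v$ acts freely on $\mu_{\v,\w}^{-1}(\xi_\v)$, so $|\M_{\v,\w}(\F_q)|=|\mu^{-1}(\xi_\v)(\F_q)|/|\G_\v(\F_q)|$. Applying \eqref{fourier} with $\xi=\xi_\v$ and fixed nontrivial $\Psi\colon\F_q\to\C^\times$, the numerator equals
\[
\frac{|\V_{\v,\w}|}{|\g_\v|}\sum_{X=(X_i)\in\g_\v}a_{\varrho_{\v,\w}}(X)\,\Psi\Bigl(\sum_{i\in\calV}\Tr X_i\Bigr).
\]
Differentiating the $\G_\v$-action shows $\varrho_{\v,\w}(X)$ sends $f\in\Hom(V_i,V_j)$ to $X_jf-fX_i$ and $g\in\Hom(V_i,W_i)$ to $-gX_i$, whence
\[
a_{\varrho_{\v,\w}}(X)=\prod_{(i,j)\in\calE}q^{\dim\Hom_{\F_q[x]}((V_i,X_i),(V_j,X_j))}\prod_{i\in\calV}q^{\w_i\dim\ker X_i}.
\]

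I would then sum over conjugacy classes by parametrizing each $X_i\in\gl(V_i)(\F_q)$ by its rational canonical form, a function $p\mapsto\lambda^i_p$ from monic irreducibles in $\F_q[x]$ to partitions with $\sum_p(\deg p)|\lambda^i_p|=\v_i$. Classical formulas (cf.\ \cite{macdonald}) express every ingredient---the centralizer order $|C(X_i)|$ governing the conversion $\sum_X=\sum_{[X]}|\G_\v|/|C(X)|$, the Hom- and kernel-dimensions, and $\Tr X_i$---as products over $p$ of elementary expressions in $\deg p$, $n(\lambda^i_p,\lambda^j_p)$, $n(\lambda^i_p,\lambda^i_p)$, and the Hall factor $\prod_k\prod_{j=1}^{m_k(\lambda^i_p)}(1-q^{-j\deg p})$. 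Because $\dim\ker X_i$ detects only the $p=x$ summand, the $\w$-dependence is confined to the nilpotent block; for each $p\neq x$ the corresponding factor of the Fourier sum divided by the corresponding factor of $|\G_\v|$ is $\w$-independent and equal to its $\w=0$ analogue, so these non-nilpotent factors cancel between numerator and denominator of the ratio \eqref{quivergenerate}.

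What remains is the $p=x$ contribution, indexed by a single tuple $(\lambda^i)_{i\in\calV}$. Under $q=t^{-2}$ the identities $\dim\ker X_i=l(\lambda^i)$ and $n(\lambda,(1^{w}))=w\cdot l(\lambda)$ translate $q^{\w_i\dim\ker X_i}$ into $t^{-2n(\lambda^i,(1^{\w_i}))}$, while the Hom- and centralizer-factors become $t^{-2n(\lambda^i,\lambda^j)}$, $t^{-2n(\lambda^i,\lambda^i)}$ and $(1-t^{2j})$, and the overall prefactor $t^{-d(\v,\w)}$ absorbs the dimension shift from \eqref{purel}; the nilpotent numerator and its $\w=0$ counterpart then give precisely the numerator and denominator of \eqref{quivergenerate}. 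The main technical obstacle is the cancellation step: verifying that the product over $p\neq x$ of Fourier/centralizer factors, divided by the corresponding part of $|\G_\v|$, really is $\w$-independent and coincides with its reference $\w=0$ value. This amounts to the multi-quiver generalization of the Hall--Macdonald identity for $\sum_\lambda q^{-2n(\lambda,\lambda)}/\prod_k\prod_{j=1}^{m_k(\lambda)}(1-q^{-j})$, together with a careful evaluation of $\Psi$ against the subleading coefficient of each $p$; this is the arithmetic core of the argument (cf.~\cite{hausel-betti,hausel-kac}). Freeness of the action and polynomial-count behaviour of the result then fall out of standard quiver-variety considerations.
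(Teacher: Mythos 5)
Your proposal follows exactly the arithmetic harmonic analysis route that the paper indicates for this theorem (\S 2.2 together with the references \cite{hausel-betti,hausel-kac}): reduce $P_t(\M_{\v,\w})$ to a polynomial point count via Corollary~\ref{ehresmann}, purity, and Katz's Theorem~\ref{katzt}; evaluate the count by Proposition~\ref{main}; then expand the Fourier sum over conjugacy classes of $\g_\v(\F_q)$ parametrized by partition-valued functions on monic irreducibles, isolate the nilpotent ($p=x$) block, and recognize the numerator and denominator of \eqref{quivergenerate} from the Macdonald formulas for centralizers and $\Hom_{\F_q[x]}$-spaces. The paper itself only cites \cite{hausel-betti,hausel-kac} for the computation, so your sketch is a correct expansion of the same argument, with the generating-function factorization over $p\ne x$ and the $\Psi$-evaluation — which you rightly flag as the arithmetic core — being precisely the content carried out in those references.
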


\begin{example} We will look at the case of $Q=A_1$ a single vertex with no edges. In this case the semiprojective quiver
variety $\M^{A_1}_{n,m}$ is isomorphic \cite[Theorem 7.3]{nakajima-instantons} with the cotangent bundle $T^*\Gr(n,m)$  of the Grassmanian of $m$ planes in $\C^n$. In this case we can
directly count points on $\Gr(n,m)$ over finite fields and we get the following well known formula for its Poincar\'e polynomial:
\begin{equation}\label{grassmannpoin} P(T^*\Gr(n,m);t)=\left[ \begin{array}{c} n \\ {k} 
\end{array}\right]_{t^2} = {\displaystyle \prod_{i=1}^k
\frac{1-t^{2(n+1-i)}}{1-t^{2i}}}\end{equation}\end{example}
\noindent
The combination of \eqref{grassmannpoin} and \eqref{quivergenerate}
gives a curious $q$-binomial type of theorem.

\begin{example} When the quiver is the Jordan quiver, i.e. one loop on a single vertex, then
$\M_{\v,\w}=\M_{n,m}$ the twisted Yang-Mills moduli spaces from \S \ref{adhms}. The formula
\eqref{quivergenerate} then reduces to \eqref{adhm}.
\end{example}

We also consider Nakajima quiver varieties $\M_\v$ attached to a single dimension vector
$\v=(\v_1,\dots,\v_n)$ on the same quiver $Q$. We construct 
$$\V_{\v}:=\bigoplus_{(i,j)\in \calE} \Hom(V_i,V_j),$$ which will also carry a natural
representation $$\rho_{\v,\w}:\G_{\v}\to \GL(\V_{\v,\w}).$$ In the
framework of \eqref{hksp} this gives rise to the symplectic vector
space $\bM_\v:=\V_\v\times \V_\v^*$ and the moment map
$\mu_\v:\bM_\v\to\g_\v^*$ leading to the quotient
$\M_\v:=\mu^{-1}(0)//_\xi \G_\v$ where $\xi\in \Hom(\G_\v,\C^\times)$
is a character of $\G_\v$. When $\v$ is indivisible (i.e. the equation
$\v=k\v^\prime$ for an integer $k$ and dimension vector $\v^\prime$
implies $k=1$) it is known that $\xi$ can be chosen so that $\M_\v$ is
smooth semiprojective hyperk\"ahler variety. For $\v$  indivisible
it is proved in \cite{crawley-boevey-etal} 
that
 \begin{equation}\label{cb}P(\M_\v;t)=t^{d_\v}
  A_Q(\v;t^{-2}),
\end{equation}
where $A_Q(\v;q)\in \Z[q]$ is the {\it Kac polynomial} of
$Q,\v$~\cite{kac}, which counts absolutely indecomposable
representations of the quiver $Q$ and $d_\v=\dim \M_\v$.  A generating
function formula was obtained for $A_Q(\v;q)$ by Hua \cite{hua}, and
it takes the following combinatorial form:

\begin{equation}\label{hua} \sum_{\v\in \N^r\backslash\{0\}} A_\v(q)
T^\v=(q-1)\Log\left( \sum_{\pihat=(\pi^1,\dots,\pi^r) \in \calP^r}
  \frac{\prod_{i\rightarrow j\in \Omega} q^{\langle
      \pi^i,\pi^j\rangle}} {\prod_{i\in I}
    q^{\langle\pi^i,\pi^i\rangle}
    \prod_{k}\prod_{j=1}^{m_k(\pi^i)}(1-q^{-j})
  }T^{|\pihat|}\right).\end{equation}
  
    The formula \eqref{cb} was proved by the arithmetic harmonic analysis technique of \S \ref{aha} in \cite{hausel-dtkac} for all quivers, with a cohomological interpretation of $\A_Q(\v,q)$ in the case when $\v$ is a divisible dimension vector, settling Kac's conjecture
    \cite{kac} that the $A$-polynomial $\A_Q(\v,q)\in \Z_{\geq 0}[q]$ has non-negative coefficients.

\subsubsection{Moduli of Higgs bundles $\M^g_n$} Denote by $\M^g_n$ the moduli spaces of rank $n$ degree $1$ stable Higgs bundles on a smooth projective curve of genus $g$. The construction of the moduli space can be done by algebraic geometric techniques using GIT quotients as in \cite{nitsure} or by gauge theoretical means using an infinite dimensional hyperk\"ahler 
quotient construction as was done in the original paper \cite{hitchin}. This latter construction is not algebraic, and
so it is unclear how our arithmetic harmonic analysis of \S \ref{hksp} would extend to this case. 
The cohomology  of $\M^g_n$ is the most interesting (due to various connections to a variety 
of subjects) and the least understood. There are various results on its Betti numbers available
\cite{hitchin, gothen, heinloth-etal}
but we only have a conjectured formula. First we introduce rational functions $H_n(z,w)\in \Q(z,w)$ by the generating function:
\begin{equation}\label{mhpconj} \sum_{n=0}^\infty H_n(z,w)T^n=(1-z)(1-w)\Log\left( \sum_{\lambda\in \calP} \prod  \frac{\left(z^{2l+1}-w^{2a+1}\right)^{2g}}{(z^{2l+2}-w^{2a})(z^{2l}-w^{2a+2})}\,T^{|\lambda|} \right) \end{equation} Then we have the following conjecture \cite[Conjecture 4.2.1]{hausel-villegas}
\begin{equation}\label{poinconj} P(\M^g_n;t)=t^{d_n} H_n(1,-1/t),\end{equation}
where already part of the conjecture is that $H_n(w,z)\in \Z[w,z]$ is a polynomial in $w,z$. 

\begin{remark} This conjecture was obtained via a more elaborate version of the arithmetic harmonic 
analysis technique of \S\ref{hksp}. Namely a non-abelian version of the arithmetic harmonic analysis
allows us \cite{hausel-villegas} to count points on certain $\GL_n$-character varieties; and the conjecture \eqref{mhpconj}
is a non-trivial extension of that result, and the non-abelian Hodge theorem of \cite{simpson} which
shows that this $\GL_n$-character variety is canonically diffeomorphic with $\M^g_n$ thus shares its cohomology. \end{remark}

\begin{remark} As $\M^g_n$ is semiprojective by \cite[Corollary
  10.3]{simpson-nonabelian} its cohomology is pure by
  Corollary~\ref{pure}. Therefore counting the $\F_q$ rational points
  of $\M^g_n$ would lead to its Betti numbers. However we do not know
  how to extend our arithmetic harmonic analysis of \S \ref{hksp} to
  this case. There are recent works of Chaudouard and Laumon
  \cite{chaudouard-laumon,chaudouard} where a different kind of
  harmonic analysis is used to count $\#\M^g_n(\F_q)$ but so far only
  the $n=3$ case is complete where those results confirm the
  conjecture \eqref{mhpconj}.
\end{remark}

\begin{remark} One last observation is that the similarity of
  \eqref{hua} and \eqref{mhpconj} is not accidental. In fact it was
  proved in \cite[Theorem 4.4.1]{hausel-villegas} that
  $H_n(0,\sqrt{q})=A_{S_g}((n),q)$ where $S_g$ is the $g$ loop quiver
  on one vertex. In particular a certain subring of $H^*(\M^g;\Q)$ is
  conjectured to have graded dimensions whith Poincar\'e polynomial
  $A_{S_g}((n),q)$. This and more general versions of such conjectures
  \cite[Conjecture 1.3.2]{hausel-aha2} show that the cohomology of
  Nakajima quiver varieties for comet-shaped quivers should be
  isomorphic with subrings of the cohomology of certain Higgs moduli
  spaces. This maybe relevant when we compare the large scale
  asymptotics of the Betti numbers of these varieties, as will be done
  in the remaining of this paper.
  \end{remark}

 \section{Visual distribution of Betti numbers; \newline the big picture}
 
 Motivated by Question~\ref{quest} in this section we will be studying pictures of the distribution of Betti numbers of 
 our semiprojective hyperk\"ahler manifolds. The reason we can look at very large examples are the combinatorially
 tractable formulas in the previous \S \ref{bettihk}.
 
 \subsection{Toric quiver variety $\M^{K_{40}}_{\bf 1}$}

 Using formula \eqref{complete} one can efficiently compute the Betti numbers of the toric quiver variety $\M^{K_{40}}_{\bf 1}$ 
 for the complete graph $K_{40}$ on $40$ vertices. This is a hyper-compact semiprojective 
 hyperk\"ahler variety of real dimension dimension   $2964$.  The top non-trivial Betti number therefore is the middle one $b_{1482}\approx 2 \times 10^{46}$. The sequence of Betti numbers is 
 {\em unimodal}\footnote{meaning it has single local maximum}  and the largest Betti number is
 $b_{1288}\approx 8\times 10^{58}$. In Figure~\ref{toricK40} we plotted only the non-trivial even Betti numbers\footnote{This means that  one needs to double the value on the $x$-axis to get the correct
degree for the Betti number. 
}. 
 \begin{figure}[H] 
\begin{center}
\includegraphics[height=7cm,width=12cm]{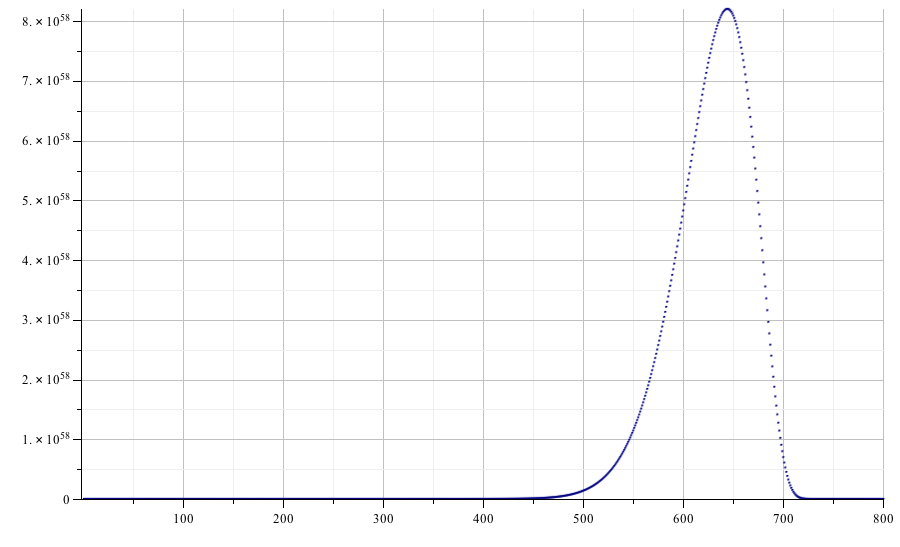}
\caption{Distribution of even Betti numbers of the toric quiver variety $\M^{K_{40}}_{\bf 1}$\label{toricK40}}
\end{center}
\end{figure}
\subsection{Hilbert scheme $(\C^2)^{[500]}$}

We can efficiently compute Betti numbers of Hilbert schemes of $n$ points on $\C^2$ for large $n$
using \eqref{hilbert}. When $n=500$ Figure~\ref{C2_500_} shows the distribution
of even Betti numbers. We have $\dim_\R (\C^2)^{[500]}=2000$. The Hilbert scheme $(\C^2)^{[500]}$ is a semiprojective hyperk\"ahler manifold, but not hyper-compact, and the top non-trivial Betti number is $b_{998}=1$. Again the sequence of Betti numbers is unimodal. The maximal Betti number is $b_{896}\approx 5.5\times 10^{19}$.

\begin{figure}[H] 
\begin{center}
\includegraphics[height=7cm,width=12cm]{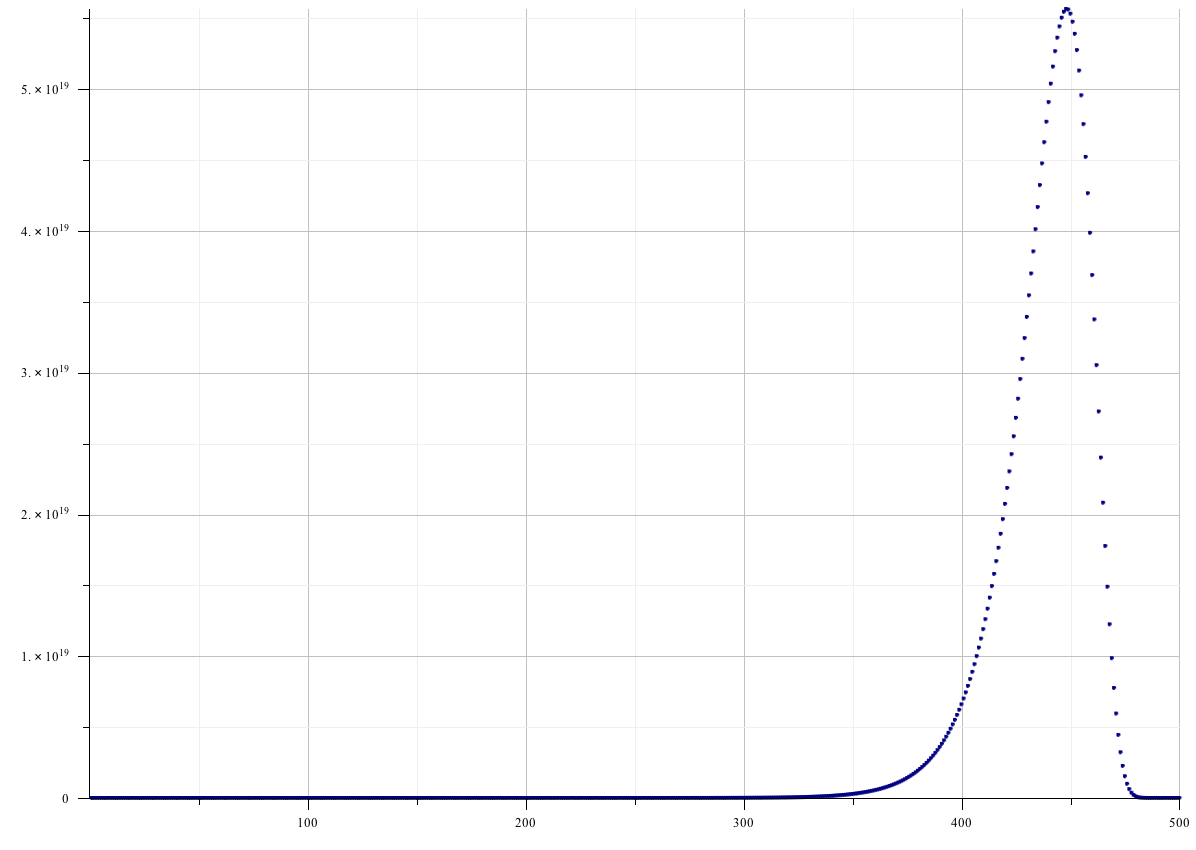}
\caption{Distribution of even Betti numbers of the Hilbert scheme $(\C^2)^{[500]}$ of $500$ points on $\C^2$\label{C2_500_}}
\end{center}
\end{figure}

 \subsection{Twisted ADHM space $\M^{\hat{A}_0}_{40,20}$}
 \begin{figure}[H] 
\begin{center}
\includegraphics[height=7cm,width=12cm]{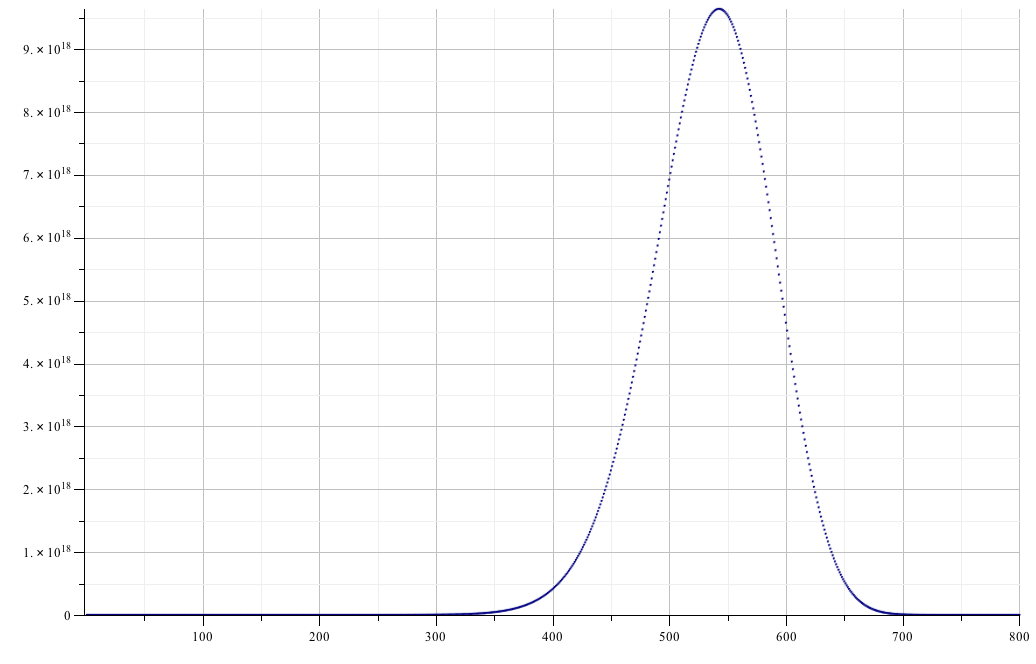}
\caption{Distribution of even Betti numbers of  ADHM space $\M^{\hat{A}_0}_{40,20}$\label{M(40,20)}}
\end{center}
\end{figure}

The Nakajima quiver variety $\M^{\hat{A}_0}_{m,n}$ attached to the Jordan quiver $\hat{A}_0$ and dimension vectors $\v=(m)$ and $\w=(n)$ is a semiprojective hyperk\"ahler variety, which is not hyper-compact.  When $m=40$ and $n=20$ Figure~\ref{M(40,20)} shows the distribution of even Betti numbers. The top non-zero Betti number is $b_{1598}=1$. There are only even Betti numbers and they form a unimodal sequence. The maximal Betti number is $b_{1086}\approx 9.6 \times 10^{17}$.

 \subsection{Cotangent bundle of Grassmannian $\M^{A_1}_{30,100}\cong T^*{\rm Gr}(100,30)$}
 \begin{figure}[htb] 
\begin{center}
\includegraphics[height=7cm,width=12cm]{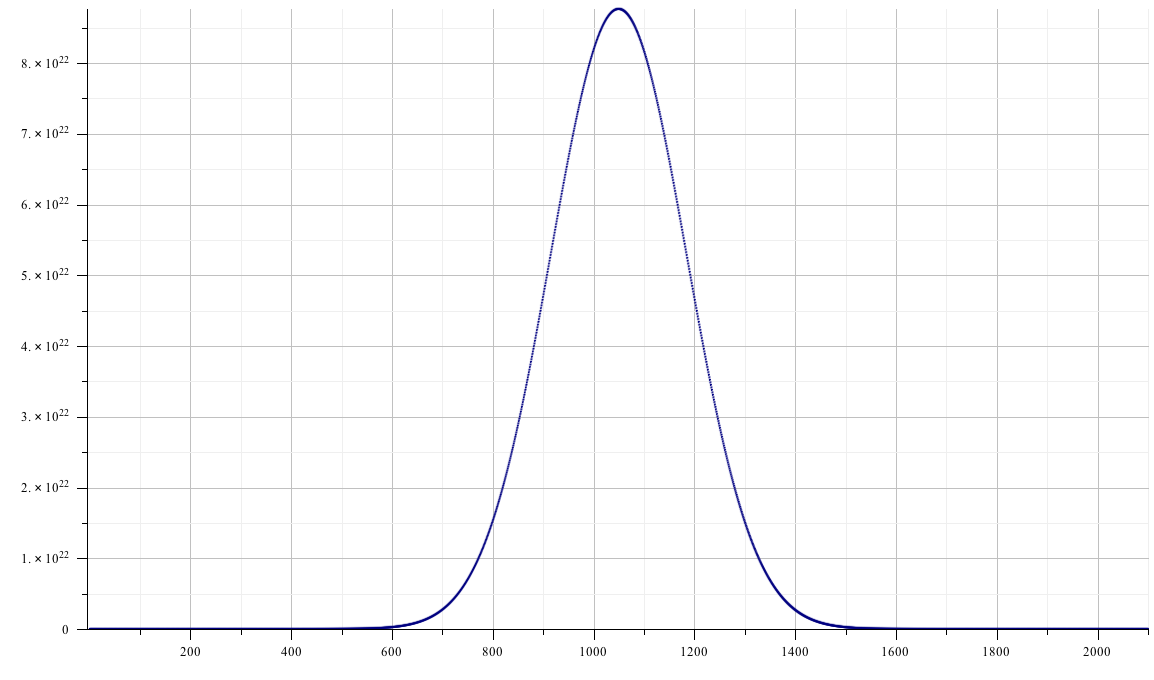}
\caption{Distribution of even Betti numbers of  cotangent bundle to Grassmannian $T^*{\rm Gr}(100,30)$\label{T*Gr(100,30)}}
\end{center}
\end{figure}

As was discussed earlier in \S \ref{nakajima} the Nakajima quiver variety $\M^{A_1}_{30,100}\cong T^*{\rm Gr}(100,30)$ for the trivial $A_1$ quiver with dimension vectors
$\v=(30)$ and $\w=(100)$ is the cotangent bundle to the Grassmannian of $30$ dimensional subspaces in $\C^{100}$. This is a semiprojective hyperk\"ahler manifold which is hyper-compact. Of course in this case the core is  the zero section of the cotangent bundle, thus it is the smooth projective Grassmannian ${\rm Gr}(100,30)$. 
It only has even cohomology and satisfies Hard Lefschetz. In particular the sequence of even Betti numbers is unimodal and symmetric. The top non-zero Betti number is $b_{4200}=1$ while the maximal one is $b_{2100}\approx 8.7 \times 10^{22}$. 

\subsection{A quiver variety $\M^{Q}_{(15,7)}$}
\begin{figure}[H] 
\begin{center}
\includegraphics[height=7cm,width=12cm]{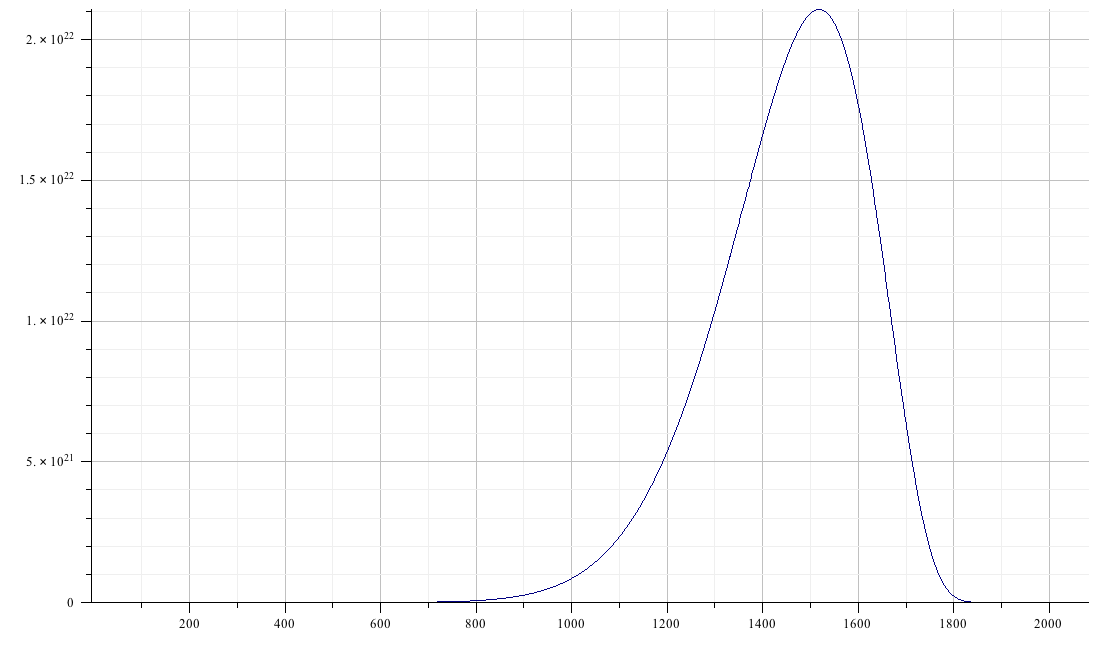}
\caption{Distribution of even Betti numbers of the quiver variety $\M^{Q}_{(15,7)}$\label{Huag10n5m7}}
\end{center}
\end{figure}

 We include a smooth quiver variety of type $\M^Q_\v$ where $Q$ is
the graph on two vertices, with $10$ loops on the first vertex, and a connecting edge to the second vertex, and furthermore $\v=(15,7)$. This is a smooth (because $(15,7)$ is indivisible) semiprojective hyperk\"ahler variety, which is not hypercompact, due to the presence of loops on the first vertex. We have $\dim_\R \M^{Q}_{(15,7)}=8328$ and top non-trivial Betti number 
 $b_{3862}=1$. Again, there are only even Betti numbers and their sequence  is unimodal, with
 maximal Betti number $b_{3036}\approx 2.1\times 10^{22}$.

 \subsection{Cotangent bundle of Jacobian $\M^{100}_{1}\cong T^*{\rm Jac}(C_{100})$}
 \begin{figure}[H] 
\begin{center}
\includegraphics[height=7cm,width=12cm]{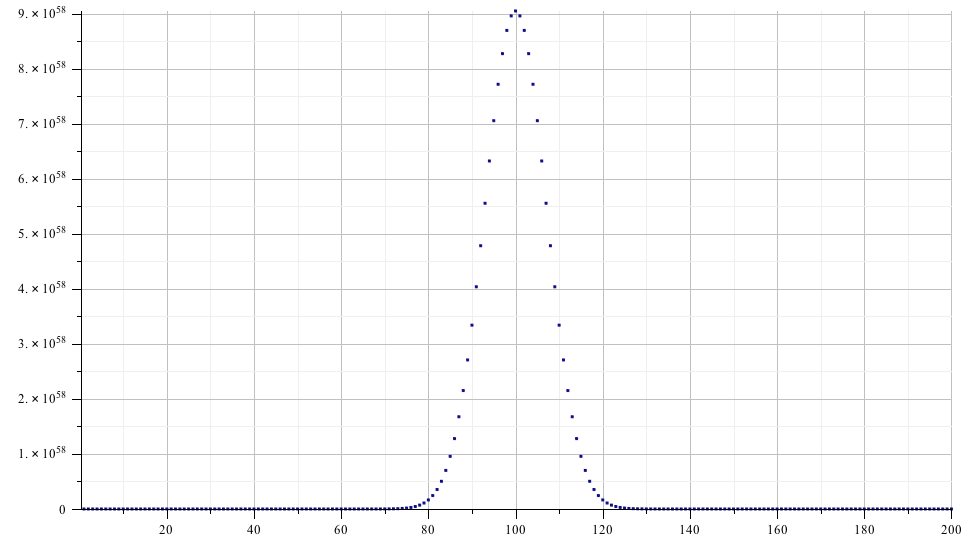}
\caption{Distribution of all Betti numbers of  cotangent bundle to Jacobian $T^*\rm Jac(C_{200})\label{TJacC100}$}
\end{center}
\end{figure}
When $n=1$ the moduli space of rank $1$ degree $1$ Higgs bundles is isomorphic with
$\M^g_1\cong T^* Jac(C_g)$ the cotangent bundle to the Jacobian of the curve $C_g$ of genus $g$. Of course this is also a semiprojective hyperk\"ahler manifold, which is hyper-compact. Just
like in the Grassmannian case above, the Jacobian $Jac(C_g)$ as the zero section of its cotangent bundle is the core of the semiprojective variety, that is the core is a smooth projective variety. The Poincar\'e polynomial then is just $$P(\M^g_1;t)=(1+t)^{2g}.$$ Figure~\ref{TJacC100} shows the distribution of all non-trivial Betti numbers when $g=100$. The top non-trivial one is $b_{200}=1$. 
The sequence of Betti numbers is unimodal, with maximal value $b_{100}\approx 8.7\times 10^{22}$.

 \subsection{Moduli space of Higgs bundles $\M^2_8$}
 \begin{figure}[H]
\begin{center}
\includegraphics[height=7cm,width=12cm]{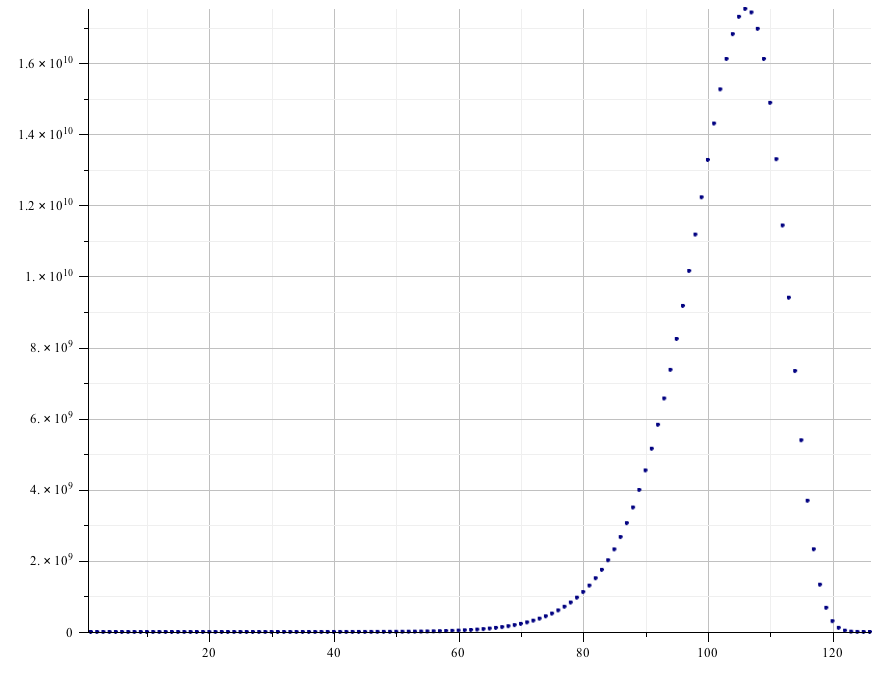}
\caption{Distribution of all Betti numbers of moduli of rank $8$ Higgs bundles $\M^2_8$ \label{hqtn8g2}}
\end{center}
\end{figure}

The moduli space of rank $n$ degree $1$ stable Higgs bundles on a smooth projective
curve of genus $g$ is smooth semiprojective, hyper-compact hyperk\"ahler manifold. We can use \eqref{mhpconj} and \eqref{poinconj} to compute the conjectured Betti numbers of $\M^g_n$ for small values of $n$ and $g$. In fact this formula is the most computationally demanding, and we could only evaluate the $g=2$ and $n=8$ case.   Part of the reason of the computational difficulty is  because the calculation goes through evaluating the two variable polynomial $H_8(q,t)$ from \eqref{mhpconj} which already has 11786 terms. At any rate in 
this particular case $\dim_\R \M^2_8=252$, thus the top non-trivial Betti number is $b_{126}$ which equals $12300$. An important difference between this case and the previous ones, is that $\M^2_8$ has non-triv
ial odd Betti numbers. The full sequence is not unimodal but both sequences of odd and even Betti numbers are unimodal. The maximal Betti number is $b_{106}\approx 1.7 \times 10^{10}$.

\section{Asymptotic shape of Betti numbers:\newline the macroscopic picture}

In the previous section we have plotted the distributions of Betti
numbers of some large examples of our semiprojective hyperk\"ahler
varieties. Originally we were motivated by studying potential
extensions of the Weak Hard Lefschetz
Theorem~\ref{weak}. Surprisingly, the plots in the previous section
behave in some peculiar manner. First we can note that the sequence of
even Betti numbers is always unimodal. Second and more puzzling is the
apparent asymptotic behavior of the distribution of Betti numbers: the
plots above seem to suggest the existence of a certain continuous
limiting distribution\footnote{Even a $C^\infty$ one: one can also
  plot the higher discrete derivatives of the distributions above and
  still get some continuous looking distributions}. Furthermore it
seems that the distributions on Figures \ref{TJacC100},
\ref{T*Gr(100,30)} and \ref{M(40,20)} are the same while the remaining
ones on Figures \ref{toricK40}, \ref{C2_500_}, \ref{Huag10n5m7} and
\ref{hqtn8g2} also look similar.

In this section we will prove some rigorous results about such
limiting distributions, in particular we will determine this
distribution in the case of Figures \ref{TJacC100},
\ref{T*Gr(100,30)}, \ref{C2_500_} and \ref{toricK40} and offer
conjectures in the remaining cases.

First we discuss what we mean by a limiting distribution of Betti
numbers of a family of varieties.   

\subsection{Generalities}
In this section we consider sequences of varieties $X_0,X_1,\ldots$
whose Betti numbers $b_i(X_n)$ approach a limiting distribution as
$n\rightarrow \infty$.  For simplicity, we will typically assume that
the all varieties $X$ under consideration satisfy $b_{2i+1}(X)=0$ and
let
$$
E(X,q):=\sum_{i=0}^{\dim X}b_{2i}(X)\ q^{\dim X-i}.
$$ 
If $X$ is a polynomial count variety with pure mixed Hodge structure
then by Theorem~\ref{katzt} $E(X,q)=\calP(q)$, where $\calP$ is a
polynomial such that $\#X(\F_q)=\calP(q)$ for generic $q$.

To a Laurent polynomial $E(q)=\sum_i e_i q^i$ with non-negative real
coefficients we associate the discrete measure $d\mu_E$ on
$[-\infty,\infty]$ such that
$$
\int_{-\infty}^\infty \phi(x) \ d\mu_E := \sum_i \phi(i)e_i.
$$

\begin{definition}
  Given a measure $\mu$ on $[-\infty,\infty]$ its {\it moments} are
  the real numbers
$$
M_k:=\int_{-\infty}^\infty   x^k\,d\mu
$$
and its {\it factorial moments} are the
real numbers
$$
m_k:=\int_{-\infty}^\infty \binom x k \,d\mu.
$$
\end{definition}
Clearly these two kinds of moments are linearly related and since the
leading term in $\binom x k$ is $x^k/k!$, typically the asymptotic
behavior of $m_k$ and $M_k/k!$ for a sequence of measures is the
same. It depends on the situation which set of moments is easier to
deal with.

For a measure $d\mu$ on $[-\infty,\infty]$ we define the generating
function of moments
\begin{equation}
\label{mom-gf}
M_\mu(t):=\sum_{k\geq 0} M_k \frac{t^k}{k!}, \qquad \qquad
m_\mu(\eta):=\sum_{k\geq 0} m_k\ \eta^k.
\end{equation}
If $d\mu_E$ is a  discrete measure associated to the Laurent polynomial
$E=\sum_i e_i q^i$ then
\begin{equation}
\label{mom-gf-E}
M_{\mu_E}(t)=E(e^t), \qquad m_{\mu_E}(\eta)=E(1+\eta).
\end{equation}
If $d\mu_1, d\mu_2$ are two measures then 
$$
M_{\mu_1}(t)M_{\mu_2}(t)=M_\mu(t), 
\qquad \qquad 
m_{\mu_1}(t)m_{\mu_2}(t)=m_\mu(t),
$$
where $d\mu:=d\mu_1 * d\mu_2$ (additive convolution in $\R$). If
$d\mu_1,d\mu_2$ have density functions $\omega_1,\omega_2$
respectively then $d\mu$ has density function
$$
\omega_1*\omega_2(x):=\int_{-\infty}^\infty \omega_1(y)\omega_2(x-y)\
dy.
$$

If $d\mu$ is a measure on $[-\infty,\infty]$ 
 then for any real number $a$ we have
\begin{equation}
\label{mom-shift}
e^{-at}M_\mu(t)=\sum_{k\geq 0} M_k^{(a)} \frac{t^k}{k!},
\end{equation}
where $M_k^{(a)}$ denotes the $k$-th moment of the translated measure
$d\mu(x+a)$.

  The kind of statement we look for is the following. 

  \begin{definition}The sequence of varieties $X_n$ have {\it limiting
      Betti distribution $d\mu$} if there exist real constants
    $\alpha_n,\beta_n,\gamma_n$ with $\alpha_n,\gamma_n >0$ such that
$$
\lim_{n\rightarrow \infty}\frac 1{\gamma_n}\Phi_n(\alpha_n x+\beta_n)
= \Phi(x)
$$
at all points $x$ of continuity of $\Phi$, where $\Phi_n$ and $\Phi$ are the
cumulative density function associated to $E(X_n,q)$ and $d\mu$
respectively. I.e.,
$$
\Phi_n(x) = \int_{-\infty}^x d\mu_n, \qquad \qquad \Phi(x) =
\int_{-\infty}^x d\mu.
$$
where $d\mu_n$ is the measure associated to $E(X_n,q)$.

\end{definition}
This notion of convergence is called {\it convergence in
  distribution}.  Typically a proof of such convergence goes by
proving that the appropriately scaled sequence of moments of
$d\mu_n$ converges to those of $d\mu$ by means of the following
\begin{theorem}
\label{mom-lim-thm}
Suppose that the distribution of $X$ is determined by its moments,
that the $X_n$ have moments of all orders,  and
that $lim_{n\rightarrow \infty} M_k(X_n) = M_k(X)$ for $k = 0, 1, 2,
\ldots$. Then $X_n$ converges in distribution to $X$.
\end{theorem}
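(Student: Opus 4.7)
The plan is to follow the classical two-step strategy of the method of moments: establish tightness of the sequence of laws of $X_n$, then pin down every subsequential limit by using the moment-uniqueness hypothesis on $X$. First I would prove tightness. Since $M_2(X_n) \to M_2(X) < \infty$, the second moments are uniformly bounded by some constant $C$, and Chebyshev's inequality gives
$$\mu_n\bigl(\{|x| > A\}\bigr) \le \frac{M_2(X_n)}{A^2} \le \frac{C}{A^2},$$
which can be made arbitrarily small uniformly in $n$. By Helly's selection theorem (Prokhorov's theorem on the line), every subsequence of $\{\mu_n\}$ then has a further subsequence $\{\mu_{n_j}\}$ converging weakly to some probability measure $\nu$ on $\R$.

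The second step is to identify $\nu$ with the law of $X$. For this I would show that the moments pass to the weak limit: for each fixed $k$, the convergence $M_{2k}(X_n) \to M_{2k}(X)$ implies a uniform bound $M_{2k}(X_n) \le C_k$, and hence the tail estimate
$$\int_{|x| > A} |x|^k \, d\mu_{n_j} \;\le\; A^{-k} \int |x|^{2k} \, d\mu_{n_j} \;\le\; C_k A^{-k}$$
gives uniform integrability of $x^k$ against $\mu_{n_j}$. Combined with weak convergence $\mu_{n_j} \Rightarrow \nu$, this lets me truncate $x^k$ by a bounded continuous function and pass to the limit, yielding $M_k(\nu) = \lim_j M_k(\mu_{n_j}) = M_k(X)$ for every $k$.

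Since the distribution of $X$ is determined by its moments, $\nu$ must equal the law of $X$. Thus every subsequence of $\{\mu_n\}$ has a further subsequence converging weakly to the law of $X$, which by the standard subsequence criterion forces the whole sequence $\mu_n$ to converge weakly to $X$, i.e.\ $X_n$ converges in distribution to $X$.

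The main obstacle is the uniform-integrability argument in the second step: weak convergence of probability measures does not in general preserve integrals of unbounded continuous functions such as $x^k$, so the boundedness of the moments \emph{one order higher} is precisely the input needed to commute limit and integral. The moment-uniqueness hypothesis is of course indispensable for the final step — in the applications in this paper the candidate limits (normal, Gumbel, Airy) are all well known to be moment-determinate, e.g.\ via the Carleman criterion or by having a moment generating function with positive radius of convergence.
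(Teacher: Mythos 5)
Your proof is correct and is essentially the standard argument for the method of moments; the paper itself does not give a proof but simply cites Theorem 30.2 of Billingsley, and the argument you lay out (tightness via Chebyshev and Prokhorov/Helly, uniform integrability from one higher moment to pass $M_k$ to a subsequential weak limit, then moment-determinacy of $X$ plus the subsequence principle) is precisely the proof given in that reference.
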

\begin{proof}
This is Theorem 30.2 in \cite{billingsley}.
\end{proof}

\subsection{Large Tori and Grassmannians} 
\label{torigras}
When  $X_n=\M^n_1=T^*{\rm Jac}(C_{n})$ it has the topology of a $2n$-dimensional split torus. 
Then
$$
b_i(X_n)=\binom {2n}i .
$$
It follows from the Central Limit theorem that the sequence $X_0, X_1,\ldots$ has Gaussian limiting
Betti distribution. This we could observe in Figure~\ref{TJacC100} for $n=100$.

Now fix a positive integer $r$ and consider  the 
Grassmanian variety $X_n:=G_r^{r+n}$ parametrizing $r$-dimensional
planes in an ambient space of dimension $r+n$. It is well know that
the number of points of the Grassmanian over a finite field is given a
by a $q$-binomial number. Explicitly,
\begin{equation}
\label{grassm-count}
E_n(q):=\#G_r^{r+n}(\F_q)=
\left[\begin{array}{c}
n+r\\
r
\end{array}\right]=\frac{\prod_{j=1}^r(q^{n+j}-1)}
{\prod_{j=1}^r(q^j-1)}.
\end{equation}
Consider the $j$-th factor of this product and assume that $n=jm$ for
some integer $m$. Then 
\begin{equation}
\label{grassm-j-factor}
\frac{q^{n+j}-1}
{q^j-1}=
q^{n/2}\frac{q^{(n+j)/2}-q^{-(n+j)/2}}{q^{j/2}-q^{-j/2}} 
=q^{n/2}\sum_{i=0}^mq^{j(i-m/2)}.
\end{equation}
If we now replace $q$ by $q^{1/n}$ and ignore the power of $q$
prefactor we obtain
$$
\sum_{i=0}^mq^{i/m-1/2}.
$$
As $m$ approaches infinity the associated density function converges
to $\chi^{(1)}:=\chi_{[-1/2,1/2]}$, the characteristic function of the
interval $[-1/2,1/2]$.

Therefore, if $n$ is divisible by all $j=1,2,\ldots,r$ then $E_n(q)$
has associated density function that scaled appropriately approximates
the $r$-th iterated convolution of~$\chi^{(1)}$: 
$$
\chi^{(r)}:= \underbrace{\chi^{(1)} * \cdots * \chi^{(1)}}_r.
$$
Consequently, we should expect $X_n$ to have limiting Betti
distribution $\chi^{(r)}$. To prove this for the full sequence $X_n$
(and not just for the subsequence of $n$'s in the previous argument) we
consider the moment generating function $E(e^t)$ (see \eqref{mom-gf}).
By \eqref{grassm-j-factor} we have 
$$
e^{-rt/2}E_n(e^{t/n})= 
\prod_{j=1}^r
\frac{e^{\tfrac12(1+j/n)t}-e^{-\tfrac12(1+j/n)t}}{e^{\tfrac12(j/n)
    t}-e^{-\tfrac12(j/n) t}}.
$$
Taking the limit as $n\rightarrow \infty$ we obtain
\begin{equation}
\label{grassm-limit}
\lim_{n\rightarrow \infty} \frac{r!}{n^r}e^{-rt/2}E_n(e^{t/n}) =
\left(\frac{e^t-e^{-t}}t\right)^r.
\end{equation}
The function $(e^{t/2}-e^{-t/2})/t$ is precisely the moment generating
function for $\chi^{(1)}$. Hence~\eqref{grassm-limit} shows that
indeed the $X_n$ have limiting Betti distribution $\chi^{(r)}$
by Theorem~\ref{mom-lim-thm}. 

The density functions $\chi^{(r)}$ have a long history. In
approximation theory they are called {\it central $B$-splines}
(see~\cite{butzer} for details). The support of $\chi^{(r)}$ is
the interval $[-r/2,r/2]$, it is a $C^{n-2}$ function and a polynomial
of degree $n-1$ in each subinterval $[m-r/2,m+1-r/2]$ for
$m=0,1,\ldots,r-1$. By the central limit theorem the distribution
$\chi^{(r)}$ approaches a Gaussian distribution
as~$r\rightarrow~\infty$. More precisely~\cite[(4.7)]{butzer},
$$
\lim_{r\rightarrow \infty}
\sqrt{r/6}\, \chi^{(r)}\left(\sqrt{r/6}x\right)
= \frac 1{\sqrt{\pi }}e^{-x^2}.
$$
\subsection{Large Hilbert schemes of points on $\C^2$}
Consider the sequence of varieties $X^{[n]}$ the Hilbert scheme of $n$
points on $X=\C^2$. It follows from~\eqref{number parts} that if
$d\mu_n$ denotes the discrete measure associated to $X^{[n]}$ then
\begin{equation}
\label{hilb-partitions}
\Phi_n(x)=\int_{-\infty}^x d\mu_n= \#\{\lambda \, |\, |\lambda|=n,
l(\lambda)\leq x\}.
\end{equation}
By a theorem of Erd\H{o}s and Lehner \cite[Theorem 1.1]{erdos-lehner}
\begin{equation}
\label{gumbel}
\lim_{n\rightarrow \infty} \frac1{p(n)}\Phi_n(\alpha_nx+\beta_n) =
e^{-c^{-1}e^{-cx}},
\end{equation}
where $p(n)$ is the total number of partitions of $n$, $c:=\pi/\sqrt6$
and 
$$
\alpha_n:=\sqrt n, \qquad \qquad \beta_n:=2c^{-1}\sqrt n\log n.
$$
It follows from this result that the sequence $X^{[n]}$ has limiting
Betti distribution given by an instance of the {\it Gumbel
  distribution}. These appear as universal distributions when
considering the maximum of samples (rather than the average as in the
central limit theorem). Such {\it extreme value distributions} are
relevant in the prediction of extreme natural fenomena like
earthquakes, floods, etc. In our concrete case the density function is
$\omega(x):=\exp(-c^{-1}e^{-cx}-cx)$ (the derivative of the right hand
side of~\eqref{gumbel}) whose graph is given in
Fig.~\ref{gumbel-distr}. This should be compared, after scaling and
reflection in the $y$-axis, to Figure~\ref{C2_500_}.
 \begin{figure}[H] 
\begin{center}
\includegraphics[height=7cm,width=12cm]{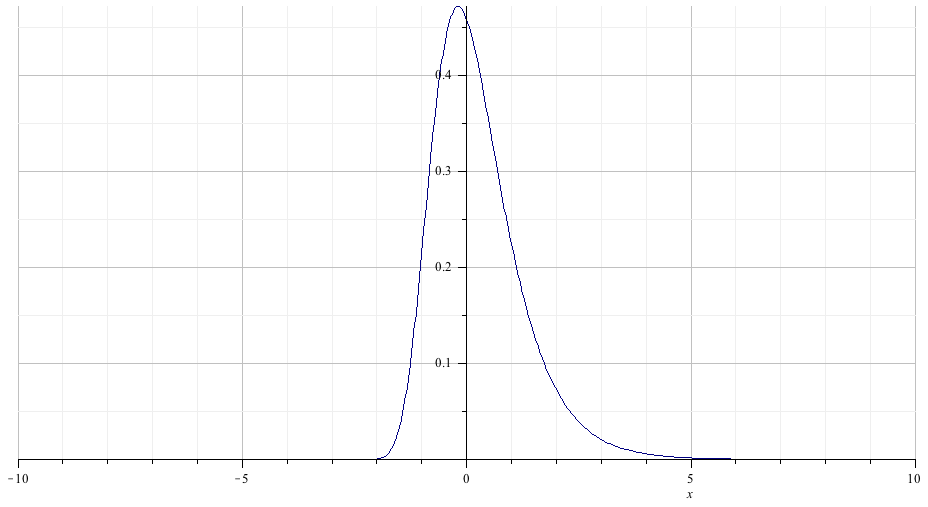}
\caption{Gumbel distribution $\exp(-c^{-1}e^{-cx}-cx)$}
\label{gumbel-distr}
\end{center}
\end{figure}

It is far from clear a priori why such a distribution would appear as
a limiting Betti distribution of the Hilbert scheme of points on
$\C^2$. It would be interesting to see what other limiting Betti
distributions occur for the sequence $S^{[n]}$ for an arbitrary smooth
surface $S$.

On the other hand, it is not hard to convince ourselves of the
relevance of extreme value distributions for our problem 
given~\eqref{hilb-partitions}. Indeed, $l(\lambda)=\lambda'_1$, where
$\lambda'$ is the partition dual to $\lambda$. In other words, the
length of a partition equals the largest part of its dual.

\subsection{Large toric hyperk\"ahler varieties}
\label{largetoric}
Take $X_n=\M^{K_n}_{\bf 1}$ the hyperk\"ahler toric quiver variety associated to the complete graph
$K_n$ on $n$-vertices. As mentioned in \S~\ref{bettihk} we have that $E(X_n,q)$ equals a
polynomial invariant, the external activity polynomial of the graph $K_n$.

We claim that $X_n$ has a limiting Betti distribution known as the
{\it Airy distribution} (for another instance of this phenomenon
see~\cite{reineke2}). This distribution appears in several different
combinatorial and physical problems and has its origin as the
distribution of the area of a Brownian excursion. Its density function
is rather complicated to describe explicitly (its graph is given in
Fig.~\ref{airy-distr}). In particular, its relation to the classical
Airy function, from where the distribution gets its name, is not that
straightfoward to state. We will work instead with the moments which
luckily determine the Airy distribution uniquely
(see~\cite[Thm. 3]{Takacs}). 

There is a sizable literature on the Airy distribution; we will use
the survey~\cite{janson} as our main reference and point to the
interested reader to the works cited there for more details. But we
should warn the reader that there exists a different but related
distribution called in the literature the map-Airy distribution. 

 \begin{figure}[H] 
\begin{center}
\includegraphics[height=7cm,width=12cm]{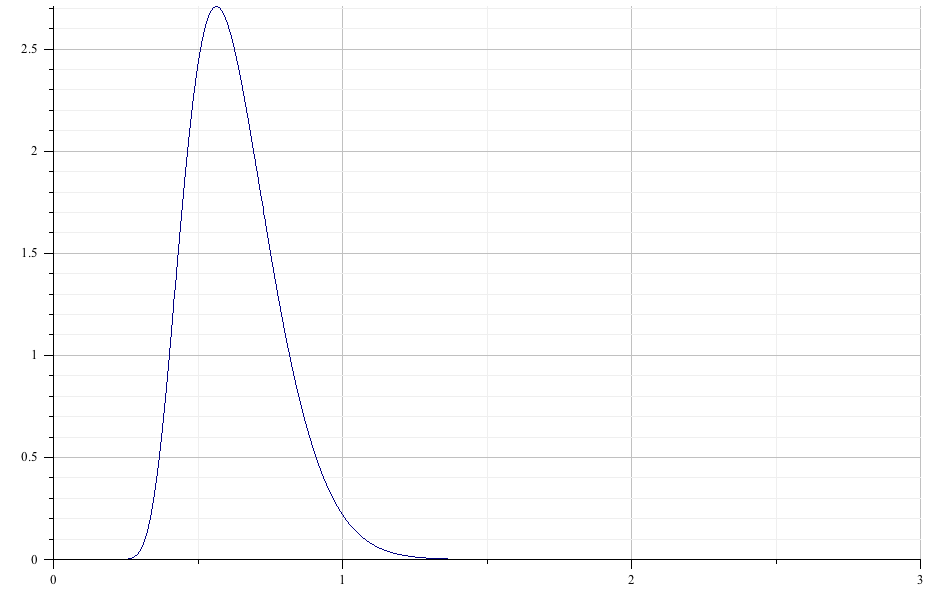}
\caption{Airy distribution density function}
\label{airy-distr}
\end{center}
\end{figure}

We start by defining the rational constants $c_k$ by means of the
following expansion
\begin{equation}
\sum_{k\geq 1} c_k T^k:=\log \sum_{n\geq 0}
\frac{(1/6)_n(5/6)_n}{n!}\left(\frac 3 2T\right)^n,
\end{equation}
where $(a)_n:=a(a+1)\cdots (a+n-1)$ is the Pochhammer symbol.
The first few values are
\begin{center}
\begin{tabular}{c|ccccc}
$k$&1&2&3&4&5\\
\hline
$c_k$&5/24 &5/16 & 1105/1152& 565/128&82825/3072
\end{tabular}
\end{center}
We call $c_k$ the {\it Wright
  constants}. There is a large number of different normalizations of
these constants in the literature; see~\cite{janson} for a
comprehensive comparison between these. It is not hard to show
that we have
$$
c_k=\sum_{Q} \frac 1{\Aut(Q)},
$$
where $Q$ runs over all connected trivalent graphs on $k$
(unlabeled) vertices and $\Aut(Q)$ denotes its group of
automorphisms. 

Now we define the constants $\rho_k$ by~\cite[(34)]{janson}
\begin{equation}
\label{c-defn}
\rho_{-1}:=1,\qquad \rho_0:=\frac{\sqrt{2\pi}}4,\qquad
\rho_k:=\frac{\sqrt\pi}{2^{\tfrac12\left(3k-1\right)}
  \Gamma\left(\tfrac32k\right)}c_k, \qquad  
k\geq 1.
\end{equation}
Then the $k$-th moment $M_k$ of the Airy distribution is
$k!\rho_{k-1}$~\cite[(36)]{janson}; concretely,
$$
M_0=1,\qquad M_1=\frac{\sqrt{2\pi}}4=0.626657068\ldots \qquad
M_2=\frac 5{12}=0.416666666\ldots,\qquad \text{etc.}
$$
(more numerical values are listed in~\cite[Table 4]{Takacs}).

To connect back to the external activity polynomial of $K_n$ note that
by~\eqref{R-pol-defn} 
\begin{equation}
\label{K_n-R-pol}
R_{K_n}(q)=\sum_{k\geq 0}C_{n,n+k-1}(q-1)^k,
\end{equation}
where $C_{n,m}$ denotes the number of connected graph on $n$ labeled
vertices with  $m$ edges. It follows from~\eqref{mom-gf-E} that 
$$
m_{\mu_{E_n}}(\eta)=R_{K_n}(1+\eta)=\sum_{k\geq 0}C_{n,n+k-1}\eta^k.
$$
Hence the $k$-th factorial moment $m_{n,k}$ of $\mu_{E_n}$ is
precisely $C_{n,n+k-1}$. 

By a standard result $C_{n,n-1}$, the number of trees on $n$ labeled
vertices, is $n^{n-2}$. Wright proved~\cite[(20)]{janson} that for
fixed $k\geq 0$ we have
\begin{equation}
\label{wright-asympt}
C_{n,n+k-1}\sim \rho_{k-1}n^{n-2+\tfrac32 k}, \qquad n\rightarrow \infty.
\end{equation}
Therefore,
\begin{equation}
\label{wright-asympt-1}
\frac{m_{n,k}}{m_{n,0}} \sim \rho_{k-1}n^{\tfrac32 k}, \qquad
n\rightarrow \infty. 
\end{equation}
Our claim on the limiting Betti distribution of $X_n$ now follows
from Theorem~\ref{mom-lim-thm}.

 Consider now the varieties $X_{m,n}$ associated to the complete
bipartite graph $K_{m,n}$. By~\cite{hausel-sturmfels}
$E_{m,n}(q):=E(X_{m.n},q)$ is the external activity polynomial of $K_{m,n}$. Denote
by $M_k^{m,n}$ de the $k$-th moment of $\mu_{E_{m,n}}$. It is not hard
to prove that for fixed $n$, $M_k^{m,n}/n^m$ is a polynomial in $m$ of
degree $k+n-1$. We normalize the leading coefficient as follows
$$
M_k^{m,n} \sim \alpha_{n,k}  n^{m-n-k+1} m^k,
\qquad m\rightarrow \infty. 
$$
for some constants $\alpha_{n,k}$. A computation shows that
$$
\beta_{n,k}:=\binom{n+k-1}k\alpha_{n,k}=
n\sum_{|\lambda|=n} (-1)^{l(\lambda)-1}
\frac{(l(\lambda)-1)!}
{\prod_{i\geq 1} m_i!i!^{m_i}}
n(\lambda')^{n+k-1},
$$
where the sum is over all partitions $\lambda$ of $n$, 
$m_i:=m_i(\lambda)$ is the multiplicity of $i$ in $\lambda$ and 
$$
n(\lambda'):= \sum_{i\geq 1}\binom i 2 m_i.
$$
Here is a table with the first few values of $\beta_{n,k}$, which are
non-negative integers.
\begin{center}
\begin{tabular}{c|rrrrrr}
$n\backslash k $ & 0 & 1 & 2 & 3 & 4 & 5\\
\hline
1 & 1 & 0 & 0 & 0 & 0 & 0\\
2 & 1 & 1 & 1 & 1 & 1 & 1 \\
3 & 3 & 12 & 39 & 120 & 363 & 1092 \\
4 & 16 & 156 & 1120 & 7260 & 45136 & 275436 \\
5 & 125 & 2360 & 30925 & 353500 & 3795225 & 39474960
\end{tabular}
\end{center}
Note that $\beta_{n,0}=n^{n-2}$.  We have the following generating
function identity
\begin{equation}
\label{bipart-gf}
\sum_{k\geq 0}\alpha_{n,k}\frac{t^k}{k!} =
\left(\frac{e^t-1}t\right)^{n-1}R_{K_n}(e^t),
\end{equation}
where $R_{K_n}$ is the external activity polynomial of the complete graph $K_n$.  It
follows that  $\alpha_{n,k}$ is the $k$-th moment of
$$
\tilde \omega_n:= \underbrace{\chi_{[0,1]}dx * \cdots *
 \chi_{[0,1]}dx}_{n-1} *d\mu_n, 
$$
where $\chi_{[0,1]}$ is the characteristic function of the interval
$[0,1]$ and $d\mu_n$ is the measure $d\mu_{E_n}$ for
$E_n:=E(X_n,q)$ and $X_n$ the variety of 4) associated to $K_n$.  
We deduce that the sequence of varieties $X_{m,n}$ for fixed $n$
has limiting Betti distribution $\tilde\omega_n$ as $m\rightarrow
\infty$. 

As in the case  of the Grassmanian the density function $\tilde
\omega_n$ is a spline of degree $n-2$. For example, for $n=3$ we find
that
$$
\frac{\alpha_{3,k}}{\alpha_{3,0}}=\frac{3^{k+1}-1}{(k+2)(k+1)}
$$
are the moments of the density function
$$
\begin{cases}
0 & x < 0 \\
\tfrac23x & 0\leq x <  1\\
 -\tfrac13x+1 & 1 \leq x < 3 \\
0 & 3 \leq x
\end{cases}.
$$
This is, up to scaling, a continuous piecewise linear approximation to
the density function of the Airy distribution (its graph is given in
Fig.~\ref{spline}).
 \begin{figure}[H] 
\begin{center}
\includegraphics[height=7cm,width=12cm]{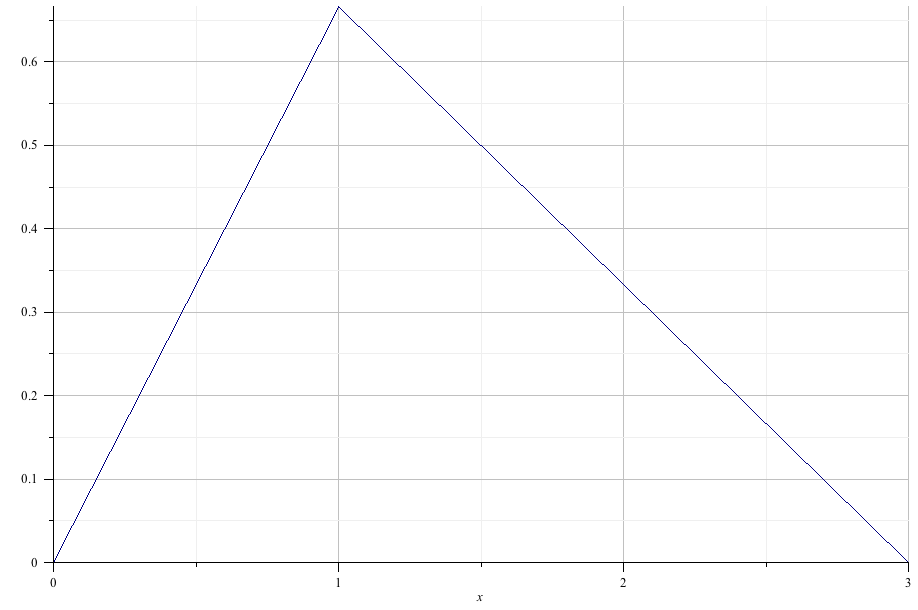}
\caption{Spline approximation to the scaled Airy distribution \label{spline}}
\end{center}
\end{figure}

On the other hand the coefficients of the external activity polynomial for the graph
$K_{3,100}$ are shown in Fig.~\ref{K-3-100}.
 \begin{figure}[H] 
\begin{center}
\includegraphics[height=7cm,width=12cm]{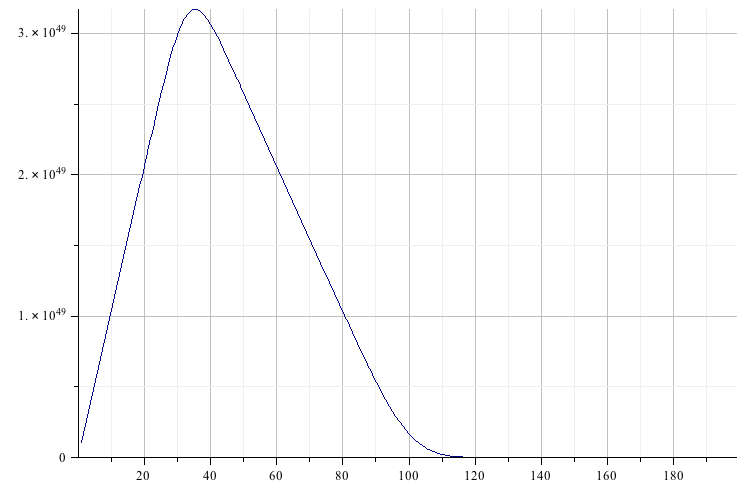}
\caption{External activity polynomial of $K_{3,100}$\label{K-3-100}
}
\end{center}
\end{figure}

The first factor on the right hand side of~\eqref{bipart-gf} has the
expansion
$$
\left(\frac{e^t-1}t\right)^{n-1}=:\sum_{k\geq 0} \gamma_{n,k} t^k, \qquad
\qquad \gamma_{n,k}:=(n-1)!\sum_{k\geq 0}\stirling {n+k-1}{n-1}
\frac{t^k}{(k+n-1)!},
$$
where $\stirling a b$ denotes the Stirling numbers of the second
kind. We have for fixed $k$
$$
\gamma_{n,k} \sim \frac{n^k}{2^kk!}, \qquad
 n\rightarrow \infty.
$$
Hence by replacing $t$ by $t/n^{3/2}$ in~\eqref{bipart-gf} we see
that, appropriately scaled, the distributions $\tilde \omega_n$
converge to the Airy distribution as $n\rightarrow \infty$.  We
actually expect that the double sequence $X_{m,n}$ (rather than the
iterated limit $\lim_n\lim_m$ we considered) should have the Airy
distribution as its limiting Betti distribution.

\subsection{Large quiver varieties: heuristics}
\label{largequiver}

Let $Q=(\calV,\calE)$ be a quiver (see~\ref{nakajima}).  Given a
dimension vector $\vv=(v_1,\ldots,v_n)$, let $A_\vv(q)$ be the Kac
polynomial for $Q,\vv$.

Recall~\eqref{cb} that for $\vv$ indivisible $A_\vv(q)$ is the reverse
of the Poincar\'e polynomial of a certain smooth quiver
variety~$\M_\vv$. We may consider the collection of all such Nakajima
quiver varieties associated to indivisible dimension vectors~$\vv$ for
a fixed quiver $Q$.  We expect that when~$\vv$ tends to infinity
generically the corresponding varieties $\M_\v$ have the Airy
distribution as limiting Betti distribution independently of the
quiver $Q$.

In this section we present some heuristics in support of this expected
{\it universality} property of the Airy distribution. Concretely, we
have Hua's formula~\eqref{hua} for $A_\v(q)$, which though somewhat
unwieldy, it has a structure quite similar to~\eqref{complete} for
computing the external activity polynomial of all complete graphs. We
have already shown in~\S~\ref{largetoric} that in this case we have
the Airy distribution as the limit. The key fact is the asymptotic
calculation of the moments~\eqref{wright-asympt-1}, which in turn
boils down to~\eqref{wright-asympt}.

Flajolet and his collaborators~\cite{Flajolet-et-al} use a saddle
point analysis to prove~\eqref{wright-asympt}. In a future publication
we hope to apply to~\eqref{hua} this saddle point analysis to prove
the expected universality of the Airy distribution for quiver
varieties.  We outline below the key ingredients of the saddle point
approach following closely~\cite{Flajolet-et-al}, to which we refer
the reader for details, and end with a brief discussion on how it
could be applied to the case of quiver varieties.

Using the standard Gaussian integral
$$
e^{-\tfrac12y^2}=\frac1{\sqrt{2\pi}}\int_{-\infty}^\infty
e^{ixy-\tfrac12x^2}\,dx
$$
we rewrite the right hand side of~\eqref{complete} with

$q=1+\eta=e^{-t}$ and $t>0$ as
$$
F(\eta,T):=\frac1{\sqrt{2\pi t}}\int_{-\infty}^\infty 
e^{-\tfrac12t^{-1}x^2} \sum_{n\geq 0}
e^{\left(ix+\tfrac12t\right)n}\frac{(T/\eta)^n}{n!} \, dx
= \frac1{\sqrt{2\pi t}}\int_{-\infty}^\infty 
e^{\phi(x,\eta,T)}\,dx,
$$
where
$$
\phi(x,\eta,T):=-\tfrac12t^{-1}x^2+e^{ix+\tfrac12t}(T/\eta).
$$

Note that by~\eqref{complete} and~\eqref{K_n-R-pol}
\begin{equation}
\label{logF-expansion}
\eta\log F(\eta,T)=C_0(T)+C_1(T)\eta +\cdots,
\end{equation}
where 
$$
C_k(T):=\sum_{n\geq 0} C_{n,n+k-1}\frac{T^n}{n!}
$$
is the exponential generating function for connected graphs on $n$
vertices with a fixed $n+k-1$ first Betti number.

In order to study the asymptotics of $C_{n,n+k-1}$ and
prove~\eqref{wright-asympt}, we compute the asymptotic behavior of
$F(\eta,T)$ as $\eta$ approaches zero by using the saddle point
method. For this purpose we compute the critical points of
$\phi_{-1}(x)$, where
$$
\phi(x,\eta,T)=\phi_{-1}(x,T)\eta^{-1}+\phi_0(x,T)+\phi_1(x,T)\eta +\cdots.
$$
We have
$$
\phi_{-1}(x,T)=\tfrac12 x^2 +e^{ix}T
$$
and the critical points are the solutions $x=x(T)$ of the equation
$e^{ix}T=ix.$ In a neighborhood of $T=0$ we can solve this equation
with $ix=w=w(T)$ where
$$
w(T)=\sum_{n\geq 1} n^{n-1}\frac{T^n}{n!},
$$
is the {\it tree function} satisfying what we call the~{\it saddle
  point equation}
\begin{equation}
\label{tree-eqn}
Te^w=w.
\end{equation}
In terms of the {\it saddle point parameter} $w$ the expansion
coefficients $C_k(T)$ of~\eqref{logF-expansion} have the following
form
\begin{eqnarray}
\label{logF-expansion-1}
 C_0=w-\tfrac12w^2,\\
 C_1=-\tfrac12\log(1-w)+\tfrac12w +\tfrac14w^2\\
C_k = \frac{E_{k-1}(w)}{(1-w)^{3(k-1)}}, \qquad k >1
\end{eqnarray}
for certain polynomials $E_k$. It follows from~\cite[Lemma
2]{Knuth-Pittel} that the 
asymptotics of $C_{n,n+k-1}$ for large $n$ and $k>1$ fixed
is of the form
\begin{equation}
\label{wright-asympt-2}
C_{n,n+k-1}\sim
E_{k-1}(1)\frac{\sqrt{2\pi}\ n^{n+\tfrac32k-2}}{2^{\tfrac32(k-1)}\
  \Gamma(\tfrac32(k-1))}, 
\end{equation}
if $E_{k-1}(1)$ does not vanish. As we will now see, in fact,
\begin{equation}
\label{E-value}
E_k(1)=c_k,
\end{equation}
 the Wright constants defined in~\eqref{c-defn}. Equivalently, as $w$
 approaches $1$
$$
C_k\sim c_k(1-w)^{-3(k-1)}+\cdots, \qquad k>1.
$$
Hence,~\eqref{wright-asympt-2} is the same as~\eqref{wright-asympt}.

We make the change of variables $x=y-iw$ in the integral and get
$$
\phi_{-1}(x,T)=-\tfrac12w^2+w + \psi(y,w),
$$
where $ \psi(y,w)=\tfrac12(1-w) y^2
+(e^{iy}-1-iy+\tfrac12y^2)w$. Therefore,
\begin{equation}
\label{F-integral}
F(\eta,T)=e^{\eta^{-1}\left(-\tfrac12w^2+w\right)}
\frac1{\sqrt{2\pi t}}\int_\calC 
e^{\eta^{-1}\psi(y,w)}h(y,\eta,w)\ dy
\end{equation}
for a certain funtion $h(y,\eta,w)$ holomorphic in $\eta$ and an
appropriate contour $\calC$.

To prove~\eqref{E-value} and get ~\eqref{wright-asympt-2} one
studies the behavior of $C_k$ as $w$ approaches $1$. Note however that
the critical point $y=0$ of
$$
\psi(y,w)=\tfrac12(1-w) y^2 -\tfrac i6wy^3+O(y^4)
$$
becomes degenerate for $w=1$. It is well known that in this situation
the standard saddle point method has to be modified to incorporate
higher order terms. We have here the simplest case of this phenomenon
known as the~{\it coalescing of saddle points}~\cite{Chester-et-al}.

We homogenize by letting  $y_*,w_*,\eta_*$ be new variables defined by
$$
w=1-w_*,\qquad y=w_*y_*,\qquad \eta=w_*^3\eta_*.
$$
Then
$$
\eta^{-1}\psi(y,w)=\eta_*^{-1}y^2_*(1-\tfrac i3y_*+w_*y_*r(y_*,w_*))
$$
for some power series $r(y_*,w_*)$ in $y_*$ with coefficients
polynomial in $w_*$. Now we can let $w_*$ approach zero and check that
up to a tractable factor the integral in~\eqref{F-integral} can be
replaced by
$$
\frac1{\sqrt{2\pi \eta_*}}\int_{-\infty}^\infty
  e^{\tfrac12\eta_*^{-1}\left(y_*^2-\tfrac i3y_*^3\right)}\ dy_*. 
$$
Since the asymptotic expansion of this integral is precisely 
$$
\sum_{n\geq 0}  \frac{(1/6)_n(5/6)_n}{n!}\left(\frac 3 2\eta_*\right)^n
$$
\eqref{E-value} follows.

To summarize, we prove that we have Betti limiting distribution the
Airy distribution by computing the limiting moments. To do this, we

\begin{enumerate}
\item
 Express the generating function of all moments as an
  integral.
\item
Find the critical points of the dominant exponential factor of the
integrand, which satisfy saddle point equations.

\item Show that in terms of the saddle point parameters the generating
  function $C_k$ of the $k$-th moment (for $k>1$) becomes a rational
  function whose leading term involves the Wright constants $c_k$.
\item
Deduce that the limiting moments are those of the Airy distribution.
\end{enumerate}

As mentioned, we expect that these same steps can be applied to Hua's
formula~\eqref{hua} to study the limiting distribution of quiver
varieties for a fixed but arbitrary quiver as mentioned earlier. The
first two steps are fairly routine. In general the saddle point
equations however will determine an algebraic variety of higher
dimension (equal to the number of nodes in the quiver). Carrying
through the last two steps then becomes more of a challenge but at
least the generic behaviour, when the dimension vector components
increase to infinity independently, should be as above.  We will
revisit this issue in a later publication.

\section{Results and speculations on the asymptotics of discrete
  distributions} 

In the previous section we proved and gave heuristics for some
asymptotical results on the distribution of Betti numbers of certain
families of semiprojective hyperk\"ahler varieties. Not surprisingly,
we found in \S\ref{torigras} that the Gaussian distribution appears in
several examples. The classical binomial distribution, given for us as
the Betti numbers of tori, is the most well known example of such
asymptotic behavior. We also found in \S\ref{torigras} that the Betti
numbers of certain families of Grassmannians also have Gaussian
limiting distributions. The same behavior was already studied by
Tak\'acs \cite{takacs} in his studies of coefficients of q-binomial
coefficients. More recent work of Stanley and Zanello
\cite{stanley-zanello} gives new results on asymptotics of these
quantities, as well as studies unimodality properties of various
sequences; not unlike our sequences of (even) Betti numbers of
semiprojective hyperk\"ahler varieties.

In fact, in our computed examples we observed that the sequence of even (similarly odd) Betti numbers form a unimodal sequence. This result follows from the Hard Lefschetz theorem for a smooth projective variety; or a semiprojective variety with core which is smooth projective. However it is unclear why this property may hold in larger generality. Clearly the even Betti numbers of smooth affine varieties will not necessarily be unimodal, as the case of $\SL(n,\C)$ shows.  In fact starting with Stanley's studies \cite{stanley2} several combinatorial sequences have been conjectured and 
some proved to be unimodal. In particular recently Huh in \cite{huh} proved that the $h$-vector of a representable matroid is log-concave and thus unimodal, in particular proving a long-standing conjecture of Colbourn on log-concavity of the external activity polynomial. 
This proves that the Betti numbers of 
toric hyperk\"ahler varieties (which are the h-vectors of rationally representable matroids) 
form a unimodal sequence. 

However, some combinatorial counterexamples are relevant for us
too. For example, in our geometric language, Stanton \cite{stanton}
found examples of Poincar\'e polynomials of closures of Schubert cells
in Grassmannians, which are not unimodal. It could be relevant for us
as the closure of the Schubert cell is an equidimensional, in fact
irreducible, proper variety with a paving, and thus has pure
cohomology. Thus potentially it could be the core of a semiprojective
hyperk\"ahler variety.

We also mention two recent appearances of the Gaussian distribution as
a limit of series of discrete distributions. First in \cite{ein-etal}
it is conjectured that sequences of ranks of certain syzygies of a
smooth projective varieties also have Gaussian distribution in the
limit.  More directly relevant for us is the recent work of Morrison
\cite{morrison}. It was proved there that the sequence of discrete
distributions given by the motivic DT invariants of $(\C^3)^{[n]}$ is
also normally distributed in the limit. In fact the generating
function \cite{behrend-etal} of such motivic DT-invariants is
similar, at least in the limit $m\to \infty$, to generating functions
of the twisted ADHM spaces $\M_{n,m}$ we discussed in
\S\ref{adhms}. Thus it is conceivable, that in an appropriate limit
the Betti numbers of $\M_{n,m}$ will also be
distributed normally; Figure~\ref{M(40,20)} seems to support this
possibility.

It is worthwhile noticing that the graphs of the Gumbel
(Figure~\ref{gumbel-distr}) and Airy (Figure~\ref{airy-distr})
distributions seem very similar to the naked eye. In fact, they are
really different (for example, they have different decay rate at the
tails). However, looking at the distribution of Betti numbers in the
case of the toric quiver variety $\M^{K_{40}}_{\bf 1}$
(Figure~\ref{toricK40}) and the Hilbert scheme $(\C^2)^{[500]}$ 
(Figure~\ref{C2_500_}) one might easily believe they are
approaching a common limit.

The sequence of graphs, such as the complete graphs $K_n$, we studied
in \S\ref{largetoric} is convergent in the sense of Lov\'asz-Szegedy
\cite{lovasz-szegedy}.  The continuous limit for their extremal
activity polynomial we found there could possibly be related to some
invariants of the limiting objects. This also raises the possibility
of existence of a limiting object to our sequences of hyperk\"ahler
manifolds, whose ``Poincar\'e series" in the appropriate sense would
agree with our limiting distribution.

The Airy distribution in \S\ref{largetoric}, governing the limit of
Betti numbers of the toric quiver varieties attached to complete
graphs and possibly sequences of more general quiver varieties as
discussed in \S\ref{largequiver} was earlier noticed to be the
limiting distribution of Betti numbers of certain non-commutative
Hilbert schemes by Reineke in \cite[Theorem 6.2]{reineke2}. In fact,
our heuristics in \S\ref{largequiver} were motivated by an effort to
systematize a proof of such results.

Intrestingly, very similar analysis to the saddle point method in \S\ref{largequiver}  have been  
used in \cite{dimofte-etal} to study the asymptotic properties of coloured ${\rm SU}(2)$ Jones polynomials. Also the large $N$ or t'Hooft limit of various $U(N)$ gauge theories, studied extensively by string theorists, also involves asymptotic studies not unlike ours. Maybe in these contexts we will find an explanation of the continuous looking limit distribution (Figure~\ref{hqtn8g2}) of the Betti numbers 
of the moduli space of rank $n$ Higgs bundles $\M_n^g$ as $n\to \infty$.

\end{document}